\theoremstyle{plain}
\newtheorem{thm}{Theorem}[section]
\newtheorem{cor}[thm]{Corollary}
\newtheorem{prop}[thm]{Proposition}
\theoremstyle{definition}
\newtheorem{defn}[thm]{Definition}
\newtheorem{ex}[thm]{Example}
\newtheorem*{repp@ex}{\repp@title (continued)}
\newcommand{\newreppex}[2]{
\newenvironment{repp#1}[1]{
 \def\repp@title{#2 \ref{##1}}
 \begin{repp@ex}}
 {\end{repp@ex}}}
\theoremstyle{remark}
\newtheorem{remark}[thm]{Remark}
\newcommand{\mf}[1]{\mbox{$\mathfrak #1$}}
\newcommand{\poi}[1]{\mbox{$\Lambda_{#1}$}}
\title{Intervals and factors in the Bruhat order}
\author{Bridget Eileen Tenner}
\address{Department of Mathematical Sciences, DePaul University, Chicago, IL 60614}
\email{bridget@math.depaul.edu}
\thanks{Research partially supported by a DePaul University Competitive Research Leave grant.}
\subjclass[2010]{05A05; 06A07; 05E15}
\begin{document}

\begin{abstract}
In this paper we study those generic intervals in the Bruhat order of the symmetric group that are isomorphic to the principal order ideal of a permutation $w$, and consider when the minimum and maximum elements of those intervals are related by a certain property of their reduced words.  We show that the property does not hold when $w$ is a decomposable permutation, and that the property always holds when $w$ is the longest permutation.\\

\noindent \emph{Keywords:} permutation, Bruhat order, interval, principal order ideal, reduced word
\end{abstract}

\maketitle

The interval structure of the Bruhat order on the symmetric group is not well understood.  One reason for this is that, sometimes, even those intervals that are isomorphic to principal order ideals are actually making use of the fact that a reduced word for the minimum element in the interval can be formed by deleting an \emph{arbitrary} subword of symbols from a reduced word of the maximum element in the interval.  The purpose of this paper is to gain a better understanding of that phenomenon.  More precisely, we explore the principal order ideals $\poi{w}$ with the property that whenever $[x,y]$ is isomorphic to $\poi{w}$, one may obtain a reduced word for $x$ by deleting a consecutive subword from a reduced word for $y$.  Note that deletion of some subword will always produce a reduced word for $x$, but not necessarily the deletion of a consecutive one.  The possibility for a consecutive such word is what we highlight in this work, and what we refer to as ``forcing'' a factor, as defined in Definition~\ref{defn:forcing a factor}. Structural analyses of intervals and principal order ideals are of particular interest because it follows from \cite{brenti} that the Kazhdan-Lusztig polynomial corresponding to the interval $[x,y]$ depends only on the principal order ideal $\poi{y}$.

The precise question we answer here is laid out in Section~\ref{section:intro}, along with the relevant objects and examples.  Section~\ref{section:definitions} gives additional definitions, and the main results appear in Theorems~\ref{thm:doesn't force} and~\ref{thm:forces}.  Finally, Section~\ref{section:questions} discusses directions for subsequent work.

\section{Introduction}\label{section:intro}

The symmetric group $\mf{S}_n$ on $\{1,\ldots,n\}$ is generated by $\{s_1,\ldots,s_{n-1}\}$, where $s_i$ is the permutation interchanging $i$ and $i+1$, and fixing all other elements.  These generators, known as \emph{simple reflections}, satisfy the Coxeter relations
$$\begin{array}{rcll}
s_i^2 &=& 1 &\text{\ \ for all } i,\\
s_is_j &=& s_js_i &\text{\ \ if } |i-j| > 1, \text{and}\\
s_is_{i+1}s_i &=& s_{i+1}s_is_{i+1} &\text{\ \ for } 1 \le i \le n-2.
\end{array}$$

A permutation $w$ can be recorded as a product of simple reflections
$$w = s_{i_1}s_{i_2} \cdots s_{i_r},$$
of which there are infinitely many representations, or written uniquely in one-line notation
$$w = w(1)w(2) \cdots w(n).$$
These two representations of the same class of objects are quite different, and each have advantages in certain contexts.  The main result of \cite{rdpp} was a way to translate between the two options.

The order in which we compose maps indicates that $s_iw$ interchanges the positions of the values $i$ and $i+1$ in the one-line notation of $w$, and $ws_i$ interchanges the values in positions $i$ and $i+1$ in the one-line notation of $w$.

\begin{ex}\label{ex:notation}
The permutation $w \in \mf{S}_4$ which maps $1$ to $3$, $2$ to itself, $3$ to $4$, and $4$ to $1$, would be written in one-line notation as
$$w = 3241.$$
A few of the infinite many ways to express $w$ as a product of simple reflections include:
\begin{eqnarray*}
w &=& s_1s_2s_1s_3\\
&=& s_2s_1s_2s_3\\
&=& s_1s_2s_3s_1\\
&=& s_1s_3s_3s_2s_3s_1.
\end{eqnarray*}
\end{ex}

As demonstrated in Example~\ref{ex:notation}, the number of simple reflections involved in representing a particular permutation is not fixed.  There is, however, a minimum value.

\begin{defn}\label{defn:decomp word length}
If $w = s_{i_1}\cdots s_{i_{\ell(w)}}$ where $\ell(w)$ is minimal, then $s_{i_1}\cdots s_{i_{\ell(w)}}$ is a \emph{reduced decomposition} of $w$, and the string of subscripts ${\sf i_1\cdots i_{\ell(w)}}$ is a \emph{reduced word} of $w$.  The set of reduced words of $w$ is denoted $R(w)$.  The nonnegative integer $\ell(w)$ is the \emph{length} of $w$.
\end{defn}

To avoid confusion with permutations, which are also strings of integers, reduced words and their symbols will be written in sans serif.

The Coxeter relations among the symbols in a reduced decomposition have obvious analogues for the symbols in a reduced word:
$$\begin{array}{rclcrcll}
s_is_j &=& s_js_i &\ \longleftrightarrow\ & {\sf ij} &=& {\sf ji} &\text{\ \ if } |i-j| > 1, \text{and}\\
s_is_{i+1}s_i &=& s_{i+1}s_is_{i+1} &\ \longleftrightarrow\ & {\sf i(i+1)i} &=& {\sf (i+1)i(i+1)} &\text{\ \ for } 1 \le i \le n-2.
\end{array}$$
A product of simple reflections is \emph{reduced} if it is a reduced decomposition for some permutation, and a string of integers is \emph{reduced} if the corresponding product of simple reflections is reduced.

\begin{ex}
Continuing Example~\ref{ex:notation}, $\ell(3241) = 4$ and $R(3241) = \{{\sf 1213}, {\sf 2123}, {\sf 1231}\}$.  (Note that we have not explained how to compute these values.)
\end{ex}

The Bruhat order is a partial ordering given to the elements of a Coxeter group.  Although we are only concerned with the symmetric group, we give the full definition here.

\begin{defn}\label{defn:bruhat}
Suppose that $x$ and $y$ are elements of a Coxeter group.  Then $x \le y$ in the Bruhat order if there exists a reduced word of $x$ that can be obtained from a reduced word of $y$ by deleting symbols and simplifying Coxeter relations as necessary.
\end{defn}

The Bruhat order gives a graded poset structure to any Coxeter group, where the rank of an element is the element's length.  The Bruhat order of $\mf{S}_4$ is depicted in Figure~\ref{fig:s4}.  Of course, Definition~\ref{defn:bruhat} can be stated analogously in terms of reduced decompositions.

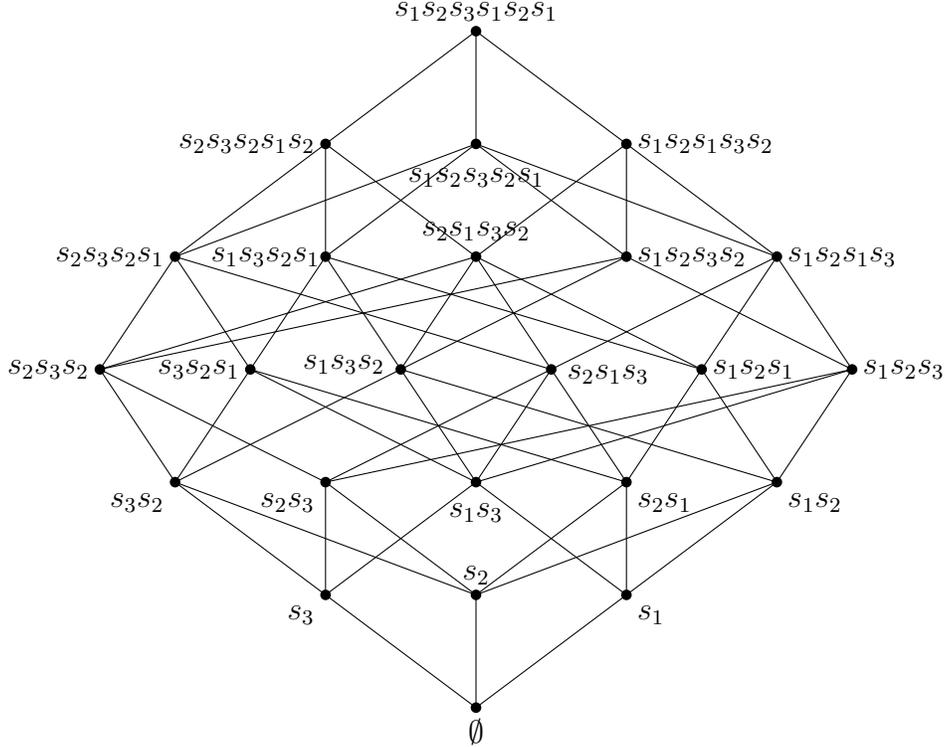
\begin{figure}[htbp]
\begin{tikzpicture}[scale=1]
\foreach \y in {0,9} {\fill[black] (0,\y) circle (2pt);}
\foreach \x in {-2,0,2} {\foreach \y in {1.5,7.5} {\fill[black] (\x,\y) circle (2pt);};}
\foreach \x in {-4,-2,0,2,4} {\foreach \y in {3,6} {\fill[black] (\x,\y) circle (2pt);};}
\foreach \x in {-5,-3,-1,1,3,5} {\fill[black] (\x,4.5) circle (2pt);}
\draw (0,0) coordinate (1234); \draw (0,9) coordinate (4321);
\draw (-2,1.5) coordinate (1243); \draw (0,1.5) coordinate (1324); \draw (2,1.5) coordinate (2134);
\draw (-2,7.5) coordinate (4312); \draw (0,7.5) coordinate (4231); \draw (2,7.5) coordinate (3421);
\draw (-4,3) coordinate (1423); \draw (-2,3) coordinate (1342); \draw (0,3) coordinate (2143); \draw (2,3) coordinate (3124); \draw (4,3) coordinate (2314);
\draw (-4,6) coordinate (4132); \draw (-2,6) coordinate (4213); \draw (0,6) coordinate (3412); \draw (2,6) coordinate (2431); \draw (4,6) coordinate (3241);
\draw (-5,4.5) coordinate (1432); \draw (-3,4.5) coordinate (4123); \draw (-1,4.5) coordinate (2413); \draw (1,4.5) coordinate (3142); \draw (3,4.5) coordinate (3214); \draw (5,4.5) coordinate (2341);
\draw (1234) -- (1243) -- (1423) -- (1432) -- (4132) -- (4312) -- (4321) -- (3421) -- (3241) -- (2341) -- (2314) -- (2134) -- (1234) -- (1324) -- (1423) -- (4123) -- (4132) -- (4231) -- (4321);
\draw (1243) -- (1342) -- (1432) -- (3412) -- (4312);
\draw (1243) -- (2143) -- (4123) -- (4213) -- (4312);
\draw (1324) -- (1342) -- (3142) -- (4132);
\draw (1324) -- (3124) -- (4123);
\draw (1324) -- (2314) -- (2413) -- (4213) -- (4231);
\draw (2134) -- (2143) -- (2413) -- (3412) -- (3421);
\draw (2134) -- (3124) -- (3142) -- (3412);
\draw (1423) -- (2413) -- (2431) -- (4231);
\draw (1342) -- (2341) -- (2431) -- (3421);
\draw (2143) -- (3142) -- (3241) -- (4231);
\draw (2143) -- (2341);
\draw (3124) -- (3214) -- (4213);
\draw (2314) -- (3214) -- (3412);
\draw (1432) -- (2431);
\draw (3214) -- (3241);
\draw (1234) node[below] {$\emptyset$};
\draw (4321) node[above] {$s_1s_2s_3s_1s_2s_1$};
\draw (2134) node[below right] {$s_1$}; \draw (1324) node[above] {$s_2$}; \draw (1243) node[below left] {$s_3$};
\draw (1342) node[below left] {$s_2s_3$}; \draw (1423) node[below left] {$s_3s_2$}; \draw (2143) node[below, yshift=-4pt] {$s_1s_3$}; \draw (2314) node[below right] {$s_1s_2$}; \draw (3124) node[below right] {$s_2s_1$};
\draw (1432) node[left] {$s_2s_3s_2$}; \draw (3214) node[right] {$s_1s_2s_1$}; \draw (4123) node[left] {$s_3s_2s_1$}; \draw (2341) node[right] {$s_1s_2s_3$}; \draw (2413) node[left, xshift=-2pt, yshift=2pt] {$s_1s_3s_2$}; \draw (3142) node[right, xshift=2pt, yshift=-2pt] {$s_2s_1s_3$};
\draw (4132) node[left] {$s_2s_3s_2s_1$}; \draw (4213) node[left, xshift=2pt] {$s_1s_3s_2s_1$}; \draw (3241) node[right] {$s_1s_2s_1s_3$}; \draw (2431) node[right] {$s_1s_2s_3s_2$}; \draw (3412) node[above, yshift=2pt] {$s_2s_1s_3s_2$};
\draw (4231) node[below, yshift=-5pt] {$s_1s_2s_3s_2s_1$}; \draw (4312) node[left] {$s_2s_3s_2s_1s_2$}; \draw (3421) node[right] {$s_1s_2s_1s_3s_2$};
\end{tikzpicture}
\caption{The Bruhat order of the symmetric group $\mf{S}_4$, where each element is labeled by one of its (possibly many) reduced decompositions.}\label{fig:s4}
\end{figure}

The Bruhat order gives a partial ordering to the permutations of a given set.  Another way to describe it is as a subword order on reduced decompositions/words.  Dyer studied intervals in the Bruhat order for all finite Coxeter groups, showing that there are only finitely many non-isomorphic intervals of any given length \cite{dyer}.  Jantzen and Hultman have classified all possible length $4$ intervals, as well as the length $5$ intervals in the symmetric group \cite{hultman1, hultman2, jantzen}.  The special class of intervals for which the minimal element is the identity, known as principal order ideals and defined formally in Definition~\ref{defn:poi}, were studied by the author in \cite{patt-bru}.  As is obvious from Table~\ref{table:counting intervals and pois}, there are intervals in $\mf{S}_n$ that never appear as principal order ideals.

\begin{table}[htbp]
\centering
\begin{tabular}{r|cccccc}
Length & $0$ & $1$ & $2$ & $3$ & $4$ & $5$\\
\hline
\# Non-isomorphic intervals in $\mf{S}_n$ & $1$ & $1$ & $1$ & $3$ & $7$ & $25$\\
\# Non-isomorphic principal order ideals in $\mf{S}_n$ & $1$ & $1$ & $1$ & $2$ & $3$ & $5$\\
\end{tabular}
\vspace{.1in}
\caption{The number of non-isomorphic intervals and principals order ideals of length at most $5$ (as defined in Definition~\ref{defn:decomp word length}) appearing in the Bruhat order of symmetric groups.}\label{table:counting intervals and pois}
\end{table}

\begin{ex}\label{ex:interval not poi}
The interval $[2143, 4231]$ in the Bruhat order of $\mf{S}_4$, depicted in Figure~\ref{fig:2143,4231}, never appears as a principal order ideal in the Bruhat order of any symmetric group.
\end{ex}

\begin{figure}[htbp]
\begin{tikzpicture}
\foreach \y in {0,4.5} {\fill[black] (0,\y) circle (2pt);}
\foreach \y in {1.5,3} {\foreach \x in {-3,-1,1,3} {\fill[black] (\x,\y) circle (2pt);};}
\draw (0,0) coordinate (2143);
\draw (-3,1.5) coordinate (2341); \draw (-1,1.5) coordinate (2413); \draw (1,1.5) coordinate (3142); \draw (3,1.5) coordinate (4123);
\draw (-3,3) coordinate (2431); \draw (-1,3) coordinate (3241); \draw (1,3) coordinate (4213); \draw (3,3) coordinate (4132);
\draw (0,4.5) coordinate (4231);
\draw (2143) node[below] {$2143$};
\draw (4231) node[above] {$4231$};
\foreach \x in {2341,2431,2413,3241} {\draw (\x) node[left] {$\x$};}
\foreach \x in {4123,4132,3142,4213} {\draw (\x) node[right] {$\x$};}
\foreach \x in {2341,2413,3142,4123} {\draw (2143) -- (\x);}
\foreach \x in {2431,3241,4213,4132} {\draw (4231) -- (\x);}
\draw (2341) -- (2431) -- (2413) -- (4213) -- (4123) -- (4132) -- (3142) -- (3241) -- (2341);
\end{tikzpicture}
\caption{The interval $[2143,4231]\subset \mf{S}_4$, not isomorphic to any principal order ideal appearing in the Bruhat order of any $\mf{S}_n$.}\label{fig:2143,4231}
\end{figure}
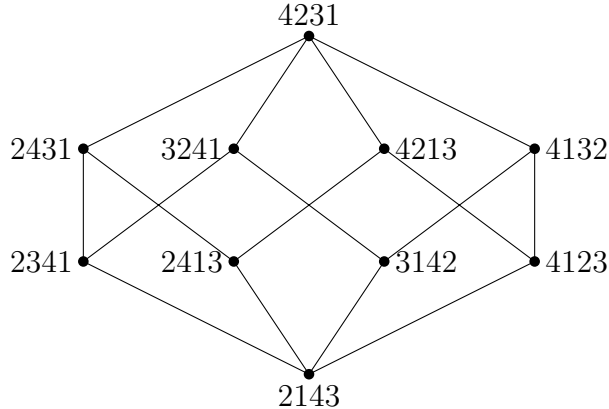

The purpose of this paper is to explore how and when generic intervals in $\mf{S}_n$ might be isomorphic to principal order ideals.  So that we can refer to it subsequently, we highlight the following discussion in a remark.

\begin{remark}\label{remark}
Principal order ideals in $\mf{S}_n$ are special cases of intervals, and they lack a certain freedom that more general intervals possess.  In the Bruhat order, $x \le y$ if there is an element of $R(x)$ that is a subword of an element of $R(y)$.  In a principal order ideal, this $x$ is the identity permutation, and so $R(x) = \{\emptyset\}$.  Of course $\emptyset$ is a subword of every word.  Thus, for principal order ideals, the entire reduced word for $y$ must be deleted in order to yield the reduced word for $x$.  In particular, note that what is getting deleted is a consecutive subword (in fact, the entire word itself).  On the other hand, in a generic interval, what gets deleted from the reduced word for $y$ need not be a consecutive subword.  This potential yields some generic intervals that do not appear as principal order ideals.
\end{remark}

\begin{ex}\label{ex:21543 52341}
The interval $[21543,52341]$ is isomorphic to the interval in Figure~\ref{fig:2143,4231}.  Note that
$$R(21543) = \{{\sf 1343},{\sf 3143}, {\sf 3413}, {\sf 3431}, {\sf 1434}, {\sf 4134}, {\sf 4314}, {\sf 4341}\}$$
and
\begin{align*}
R(52341) =& \ \{ {\sf 1234321}, {\sf 1243421}, {\sf 1423421}, {\sf 4123421}, {\sf 1243241}, {\sf 1423241}, {\sf 4123241}\\
&\phantom{\ \{}{\sf 1243214}, {\sf 1423214}, {\sf 4123214}, {\sf 1432341}, {\sf 4132341}, {\sf 4312341}, {\sf 1432314},\\
&\phantom{\ \{}{\sf 4132314}, {\sf 4312314}, {\sf 1432134}, {\sf 4132134}, {\sf 4312134}, {\sf 4321234}\}.
\end{align*}
Thus no element of $R(21543)$ can be obtained by deleting a consecutive subword of symbols from an element of $R(52341)$.
\end{ex}

The purpose of the current work is to examine when a principal order ideal in the symmetric group also appears as a more general interval.  In particular, we want to understand when such an interval, not necessarily beginning at rank $0$ in the poset, must still come from deleting a factor from a reduced word of its maximum element.

Before making this property precise, consider the following result, where vexillary permutations are exactly those that avoid the pattern $2143$.

In \cite{rdpp}, we showed that if a permutation $w$ contains a vexillary $p$-pattern, then there is a reduced word for $w$ possessing a reduced word {\sf f} for $p$ as a factor, possibly with a fixed positive integer added to each letter in {\sf f}.  If $v$ is the permutation obtained by deleting this subword from $w$, then this implies that the interval $[v,w]$ in the Bruhat order would be isomorphic to $\poi{p}$, the principal order ideal for $p$.  Indeed, this would also imply that there is a copy of $\poi{p}$ sitting as a principal order ideal inside of the principal order ideal $\poi{w}$ of $w$, as depicted in Figure~\ref{fig:vexillary result}.

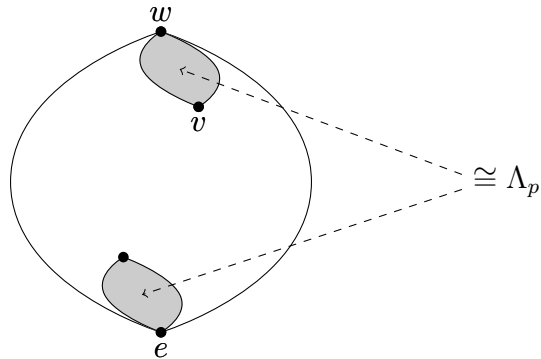
\begin{figure}[htbp]
\begin{tikzpicture}
\fill[black] (0,0) circle (2pt) node[below] {$e$};
\fill[black] (0,4) circle (2pt) node[above] {$w$};
\fill[black] (.5,3) circle (2pt) node[below] {$v$};
\draw plot [smooth,tension=1.25] coordinates{(0,0) (-2,2) (0,4)};
\draw plot [smooth,tension=1.25] coordinates{(0,0) (2,2) (0,4)};
\filldraw[black!20] plot [smooth,tension=1] coordinates {(.5,3) (-.25,3.5) (0,4)};
\filldraw[black!20] plot [smooth,tension=1] coordinates {(.5,3) (.75,3.5) (0,4)};
\draw plot [smooth,tension=1] coordinates {(.5,3) (-.25,3.5) (0,4)};
\draw plot [smooth,tension=1] coordinates {(.5,3) (.75,3.5) (0,4)};
\filldraw[black!20] plot [smooth,tension=1] coordinates {(0,0) (-.75,.5) (-.5,1)};
\filldraw[black!20] plot [smooth,tension=1] coordinates {(0,0) (.25,.5) (-.5,1)};
\draw plot [smooth,tension=1] coordinates {(0,0) (-.75,.5) (-.5,1)};
\draw plot [smooth,tension=1] coordinates {(0,0) (.25,.5) (-.5,1)};
\filldraw[black!20] (0,0) -- (-.5,1);
\filldraw[black!20] (.5,3) -- (0,4);
\fill[black] (0,0) circle (2pt) node[below] {$e$};
\fill[black] (0,4) circle (2pt) node[above] {$w$};
\fill[black] (.5,3) circle (2pt) node[below] {$v$};
\fill[black] (-.5,1) circle (2pt);
\draw[->,dashed] (4,2.1) -- (.25,3.5);
\draw[->,dashed] (4,1.9) -- (-.25,.5);
\draw (4,2) node[right] {$\cong \poi{p}$};
\draw (-4,2) node[left] {\phantom{$\cong \poi{p}$}};
\end{tikzpicture}
\caption{When a permutation $w$ contains a vexillary pattern $p$, then $\poi{w}$ will contain intervals isomorphic to $\poi{p}$ as indicated by the shading.  (There may, of course, be other intervals in $\poi{w}$ isomorphic to $\poi{p}$ as well.)}\label{fig:vexillary result}
\end{figure}

\begin{ex}
The permutation $4213$ contains the vexillary pattern $321$.  The interval $[1243,4213]$ and the principal order ideal $\poi{3214}$ are both isomorphic to $\poi{321}$, as shown in Figure~\ref{fig:1234,4213}.
\end{ex}

\begin{figure}[htbp]
\begin{tikzpicture}
\draw (0,0) coordinate (1234);
\draw (-2,1.5) coordinate (2134); \draw (0,1.5) coordinate (1243); \draw (2,1.5) coordinate (1324);
\draw (-3,3) coordinate (3124); \draw (-1,3) coordinate (2143); \draw (1,3) coordinate (1423); \draw (3,3) coordinate (2314);
\draw (-2,4.5) coordinate (3214); \draw (0,4.5) coordinate (4123); \draw (2,4.5) coordinate (2413);
\draw (0,6) coordinate (4213);
\foreach \x in {1234} {\draw (\x) node[below] {$\x$};}
\foreach \x in {2134,3124,2143,3214,1243} {\draw (\x) node[left] {$\x$};}
\foreach \x in {1324,1423,2314,2413,4123} {\draw (\x) node[right] {$\x$};}
\foreach \x in {4213} {\draw (\x) node[above] {$\x$};}
\draw (1234) -- (1243); \draw (2134) -- (2143); \draw (1324) -- (1423);
\draw (3124) -- (4123); \draw (2314) -- (2413); \draw (3214) -- (4213);
\draw[ultra thick] (1243) -- (1423) -- (2413) -- (4213) -- (4123) -- (2143) -- (1243);
\draw[ultra thick] (2143) -- (2413); \draw[ultra thick] (1423) -- (4123);
\draw[ultra thick,dashed] (1234) -- (1324) -- (2314) -- (3214) -- (3124) -- (2134) -- (1234);
\draw[ultra thick,dashed] (2134) -- (2314); \draw[ultra thick, dashed] (1324) -- (3124);
\foreach \x in {1234,2134,1324,3124,2314,3214} {\fill[white] (\x) circle (2pt); \draw[very thick] (\x) circle (2pt);}
\foreach \x in {1243,2143,1423,4123,2413,4213} {\fill[black] (\x) circle (2pt); \draw (\x) circle (2pt);}
\end{tikzpicture}
\caption{The principal order ideal $\poi{4213}$ has a copy of the principal order ideal $\poi{321}$, both as a principal order ideal (thick dashed lines) and as a generic interval (thick solid lines).}\label{fig:1234,4213}
\end{figure}
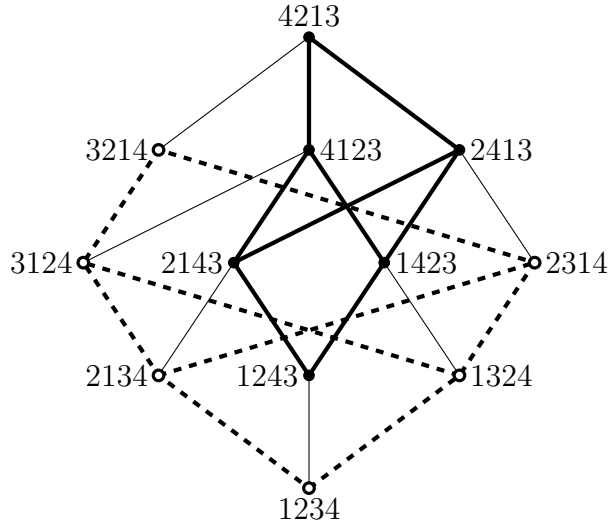

We are now able to clarify the main question of this paper: when does a principal order ideal $\poi{w} \subseteq \mf{S}_n$ have the property that for all intervals $[x,y] \cong \poi{w}$, there is a reduced word ${\sf i} \in R(x)$ and a reduced word ${\sf j} \in R(y)$ such that ${\sf i}$ can be formed by deleting a consecutive subword from ${\sf j}$?

The main results of this paper are that no decomposable permutation can force a factor (Theorem~\ref{thm:doesn't force}), and that the permutation $n(n-1)\cdots 321$ does force a factor for all $n$ (Theorem~\ref{thm:forces}).  These are preceded by a discussion of the useful terminology in Section~\ref{section:definitions}, and the paper concludes with suggestions for future work.

\section{Definitions}\label{section:definitions}

We now define the main objects of the current work.  More details and background information about Coxeter groups can be found in \cite{bjorner-brenti} and \cite{macdonald}.

Two of the most fundamental structural features of a poset are its intervals and its order ideals.  In a poset with a unique minimal element, these objects intersect at the concept of a principal order ideal.

\begin{defn}\label{defn:interval}
Consider a poset $P$ and elements $x,y \in P$ with $x \le y$.  The set of elements $\{z : x \le z \le y\} \subseteq P$ is denoted $[x,y]$, and is called an \emph{interval}.
\end{defn}

\begin{defn}\label{defn:order ideal}
An \emph{order ideal} in a poset $P$ is a subset $I \subseteq P$ such that if $y \in I$ and $x \le y$, then $x \in I$.
\end{defn}

\begin{defn}\label{defn:poi}
A \emph{principal order ideal} in a poset $P$ is an order ideal with a unique maximal element.  Equivalently, the principal order ideals of $P$ are the subsets $\poi{y} = \{z : z \le y\}$ for each $y \in P$.  If $P$ has a unique minimal element $\hat{0}$, then the principal order ideals are exactly the intervals of the form $[\hat{0},y]$.
\end{defn}

To calculate the length of a permutation, we must make a preliminary definition.

\begin{defn}\label{defn:inversion}
Given a permutation $w$, an \emph{inversion} in $w$ is a pair $(i,j)$ such that $i<j$ and $w(i) > w(j)$.
\end{defn}

Inversions are easy to see in the one-line notation of a permutation: they consist of a value (the $w(i)$ of the definition) appearing somewhere to the left of a smaller value (the $w(j)$ of the definition).  It is well known that $\ell(w)$ is equal to the number of inversions in $w$, and it is now clear that there is a unique permutation in $\mf{S}_n$ having maximal length.

\begin{defn}
Fix a positive integer $n$.  The unique element of maximal length in $\mf{S}_n$ is $w_0^n = n(n-1)\cdots 321$, and $\ell(w_0^n) = \binom{n}{2}$.
\end{defn}

As described in Definition~\ref{defn:bruhat}, the relation $x \le y$ in the Bruhat order allows any subset of symbols to be deleted from a reduced word of $y$ in order to form a reduced word of $x$.  In particular, these symbols need not be consecutive.  For example, as shown in Figure~\ref{fig:s4},
$$1324 = s_2 < s_1s_2s_3 = 2341.$$
In this paper, we will examine when all of the activity happening within an interval $[x,y]$ in the Bruhat order of some symmetric group is actually happening within some consecutive substring of the symbols of an element of $R(y)$, all of which must be deleted to form an element of $R(x)$.

\begin{defn}
A \emph{factor} in a word is a consecutive substring.
\end{defn}

This paper is concerned with understanding when intervals $[x,y]$ that are isomorphic to $\poi{w}$ for some $w$ may or may not be formed in ``interesting'' ways.  This is made more precise in the following definition.

\begin{defn}\label{defn:forcing a factor}
Fix an element $w \in \mf{S}_n$, and consider its principal order ideal $\poi{w}$.  If it is true that for every interval $[x,y] \cong \poi{w}$, there exists an element of $R(x)$ formed by deleting a factor from an element of $R(y)$, then $w$ \emph{forces} a factor.  Otherwise $w$ \emph{does not force a factor}.
\end{defn}

The ``interesting'' feature noted above was described in Remark~\ref{remark} in the introduction to this work.  The first thing to note about this topic is that determining which permutations force a factor is an interesting problem.  More precisely, not all permutations do so.

\begin{ex}\label{ex:2314 doesn't force}
Consider $\poi{2314} \subset \mf{S}_4$.  Note that the principal order ideal $\poi{2314}$ is isomorphic to the interval $[1324, 2341]$.
\begin{center}
\begin{tikzpicture}
\fill[black] (0,0) circle (2pt) node[below] {$1234 = \emptyset$\ \ \ \ \ }; \fill[black] (0,2) circle (2pt) node[above] {$2314 = s_1s_2$\,\,};
\fill[black] (-1,1) circle (2pt) node[left] {$2134 = s_1$}; \fill[black] (1,1) circle (2pt) node[right] {$1324 = s_2$};
\draw (0,0) -- (1,1) -- (0,2) -- (-1,1) -- (0,0);
\end{tikzpicture}
\hspace{.5in}
\begin{tikzpicture}
\fill[black] (0,0) circle (2pt) node[below] {$1324 = s_2$\ \ \ \ }; \fill[black] (0,2) circle (2pt) node[above] {\,\, $2341 = s_1s_2s_3$};
\fill[black] (-1,1) circle (2pt) node[left] {$2314 = s_1s_2$}; \fill[black] (1,1) circle (2pt) node[right] {$1342 = s_2s_3$};
\draw (0,0) -- (1,1) -- (0,2) -- (-1,1) -- (0,0);
\end{tikzpicture}
\end{center}
However, there is no element of $R(1324) = \{{\sf 2}\}$ that can be formed from by deleting a single factor from an element of $R(2341) = \{{\sf 123}\}$.  Thus $2314 = s_1s_2$ does not force a factor.
\end{ex}

\section{Permutations that do not force a factor}

In this section we describe a large class of permutations that do not force a factor.

\begin{defn}
A permutation $w$ is \emph{decomposable} if $\poi{w} \cong \poi{u} \times \poi{v}$, where neither $\poi{u}$ nor $\poi{v}$ is itself isomorphic to $\poi{w}$.  If $w$ is not decomposable, then it is \emph{indecomposable}.
\end{defn}

\begin{ex}\label{ex:decomposable}
The permutation $4213 \in \mf{S}_4$ is decomposable because $\poi{4213} \cong \poi{3214} \times \poi{1243}$. This is depicted in Figure~\ref{fig:1234,4213}.
\end{ex}

\begin{prop}[\cite{thesis}]\label{prop:decomp}
A permutation $w \in \mf{S}_n$ is decomposable if and only if there exists $m \in [n-2]$ and a reduced word ${\sf a_1a_2} \in R(w)$, where ${\sf a_1}$ and ${\sf a_2}$ are nonempty, such that ${\sf a_i}$ consists only of letters less than or equal to $m$, and ${\sf a_{3-i}}$ consists only of letters strictly greater than $m$.
\end{prop}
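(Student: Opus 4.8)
The plan is to prove the two implications separately, regarding the reduced-word condition as an assertion about length-additive factorizations through the two maximal parabolic subgroups straddling the threshold $m$. Write $J=\{s_1,\dots,s_m\}$ and $J'=\{s_{m+1},\dots,s_{n-1}\}$, so that $W_J=\mf{S}_{\{1,\dots,m+1\}}$ and $W_{J'}=\mf{S}_{\{m+1,\dots,n\}}$ meet only in the identity and share the single boundary index $m+1$; a permutation lies in $W_J$ exactly when its reduced words use only letters $\le m$, and in $W_{J'}$ exactly when they use only letters $>m$. Everything rests on one hands-on computation that I would carry out first: \textbf{if $x_1\in W_J$ and $x_2\in W_{J'}$ then $\ell(x_1x_2)=\ell(x_1)+\ell(x_2)$.} I would prove this by induction on $\ell(x_2)$: writing $x_2=x_2's_k$ reduced with $k\ge m+1$, it suffices to see that $s_k$ is a right ascent of $x_1x_2'$. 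Since $x_2'$ permutes $\{m+1,\dots,n\}$ and $s_k$ is an ascent of $x_2'$, we have $m+1\le x_2'(k)<x_2'(k+1)$, and applying $x_1$ fixes every value $\ge m+2$ while sending $m+1$ into $\{1,\dots,m+1\}$; so the inequality survives in each case.

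For the backward implication, assume without loss of generality that ${\sf a_1}$ uses letters $\le m$ and ${\sf a_2}$ letters $>m$ (the opposite parity is identical after swapping the roles of $J$ and $J'$), and let $u,v$ be the permutations of ${\sf a_1},{\sf a_2}$, so that $u\in W_J$, $v\in W_{J'}$, and $w=uv$ with $\ell(w)=\ell(u)+\ell(v)$ by the lemma. I would show the multiplication map $\mu\colon\poi{u}\times\poi{v}\to\poi{w}$, $(x_1,x_2)\mapsto x_1x_2$, is a poset isomorphism. Since support only shrinks downward in the Bruhat order, every $x_1\le u$ lies in $W_J$ and every $x_2\le v$ lies in $W_{J'}$, so the lemma makes each product $x_1x_2$ reduced; this yields well-definedness and surjectivity (split a subword of ${\sf a_1a_2}$ at the $J/J'$ boundary, using the subword property) and injectivity (the first $m$ one-line entries of $x_1x_2$ recover $x_1$, hence $x_2$). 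The one delicate point is order reflection, and here I would use that, by uniqueness of parabolic factorization, $p:=x_1x_2$ factors through $W_J$ \emph{and} through $W_{J'}$ by the \emph{same} decomposition: $x_2={}^Jp$ is the minimal representative of $W_Jp$ (as $x_2$ has no left descent in $J$) and $x_1=p^{J'}$ is the minimal representative of $pW_{J'}$ (as $x_1$ has no right descent in $J'$). Both are \emph{quotient} projections, which are order preserving, so $p\le p'$ forces $x_1\le x_1'$ and $x_2\le x_2'$; it is essential that both recovered factors are quotient representatives, since projection onto the subgroup component is not order preserving. Because $u,v\ne e$ a rank count shows neither factor is isomorphic to $\poi{w}$, so $w$ is decomposable.

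For the forward implication I would argue at the level of atoms. The atoms of $\poi{w}$ are the $s_i$ with $i\in\mathrm{supp}(w)$, and any isomorphism $\poi{w}\cong\poi{u}\times\poi{v}$ partitions them by which factor they come from. The structural observation is that two adjacent atoms $s_i,s_{i+1}$ must belong to the same factor whenever both $s_is_{i+1}$ and $s_{i+1}s_i$ lie below $w$: were they in different factors, the product would force a \emph{unique} rank-two element above $s_i$ and $s_{i+1}$, contradicting the presence of the two distinct such elements $s_is_{i+1}$ and $s_{i+1}s_i$. Non-adjacent atoms commute and impose no constraint. Defining a graph on $\mathrm{supp}(w)$ with an edge between consecutive indices exactly when both of their length-two products lie below $w$, each factor of any decomposition is a union of connected components, so a nontrivial decomposition forces this graph to be disconnected. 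A gap between components supplies a threshold $m$ separating $\mathrm{supp}(w)$ into a low block and a high block across which no edge runs, and I would translate this into the reduced-word condition by verifying the one-line criterion $w(\{1,\dots,m\})\subseteq\{1,\dots,m+1\}$ (equivalently $w\in W_JW_{J'}$) or its inverse $w^{-1}(\{1,\dots,m\})\subseteq\{1,\dots,m+1\}$ ($w\in W_{J'}W_J$), either of which produces a reduced word sorted across $m$.

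The main obstacle is exactly this last translation: passing from a \emph{poset-level} separation of atoms to an honest, length-additive, support-separated \emph{group} factorization of $w$. The difficulty is that the abstract decomposition need not be realized by the multiplication map, and the atom partition it induces need not be an interval split --- for instance $\poi{4123}\cong\poi{s_1s_3}\times\poi{s_2}$ as posets even though $s_1s_3\cdot s_2\ne 4123$ and the partition $\{1,3\}\sqcup\{2\}$ straddles $2$ --- so one cannot simply read $u$ and $v$ off the isomorphism. I expect the clean remedy is to pass to the finest decomposition into directly indecomposable factors, check that its atom partition refines the component graph, and then deduce the boundary inequality directly from the Bruhat comparison criterion applied to the excluded pattern $s_{m+1}s_m$ (or $s_ms_{m+1}$) at the chosen threshold. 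Showing that some graph separation is always witnessed by one of these one-line inequalities is the crux that the remaining argument must settle.
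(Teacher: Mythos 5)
The paper itself gives no proof of this proposition (it is cited from \cite{thesis}), so your proposal has to stand on its own merits. Your backward implication does: the length-additivity lemma for products across the threshold (your inductive ascent argument is correct, and is equivalent to the standard parabolic-factorization fact), the multiplication map $\mu\colon\poi{u}\times\poi{v}\to\poi{w}$ with well-definedness, surjectivity and injectivity via the subword property, and especially your handling of order reflection through the two \emph{quotient} projections $p\mapsto {}^Jp$ and $p\mapsto p^{J'}$ (correctly flagging that projection to the subgroup component is not order-preserving) are all sound, and the rank count rules out either factor being isomorphic to $\poi{w}$.

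The forward implication, however, has a genuine gap, and you name it yourself. Your reduction is fine as far as it goes: atoms of $\poi{w}$ are partitioned by any direct-product decomposition, adjacent atoms $s_i,s_{i+1}$ with both $s_is_{i+1}\le w$ and $s_{i+1}s_i\le w$ must land in the same factor, so a nontrivial decomposition disconnects your graph. But nothing in the proposal gets from ``the graph is disconnected'' to ``some threshold $m$ admits a reduced word sorted into low and high blocks''; the proposed remedy (pass to the finest direct factorization, then ``deduce the boundary inequality from the Bruhat comparison criterion'') is a stated hope, not an argument, and your own example $\poi{4123}\cong\poi{s_1s_3}\times\poi{s_2}$ shows why something must be proved here. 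For what it is worth, the gap can be closed along exactly your lines. If $i$ and $i+1$ both lie in the support of $w$ but carry no edge, first note that \emph{at least} one of $s_is_{i+1},s_{i+1}s_i$ is $\le w$ (any reduced word of $w$ contains a letter $i$ and a letter $i+1$, and those two letters form a reduced subword), hence exactly one is, say $s_{i+1}s_i\not\le w$; by the subword property this means every reduced word of $w$ has every occurrence of the letter $i$ preceding every occurrence of the letter $i+1$. Now sort any reduced word by commutations alone: an adjacent pair (high letter, low letter) with the high letter $>i$ and the low letter $\le i$ is never $(i+1,i)$ by the previous sentence, and every other such pair commutes since the letters differ by at least $2$; each swap strictly reduces the number of out-of-order pairs, so the process terminates in a reduced word of the form (letters $\le i$)(letters $>i$), with both blocks nonempty. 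Together with the easy case in which the support of $w$ has a gap (where all low letters commute past all high letters outright), this supplies the threshold $m$ and finishes the proof. Since none of this appears in your write-up, the forward direction as written is incomplete.
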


\begin{reppex}{ex:decomposable}
The reduced words of $4213 \in \mf{S}_4$ are $\{{\sf 3121}, {\sf 3212}, {\sf 1321}\}$. Then in the language of Proposition~\ref{prop:decomp}, we can let $m = 2$, and ${\sf a_1} = {\sf 3}$ and ${\sf a_2} = {\sf 121}$. Thus $4213$ is decomposable.
\end{reppex}

We now show, constructively, that no decomposable permutation forces a factor.  This means that for any decomposable permutation $w$, we must produce an interval $[x,y] \cong \poi{w}$ such that no reduced word for $x$ can be obtained by deleting a factor from any reduced word for $y$.

\begin{thm}\label{thm:doesn't force}
If $w$ is decomposable, then $w$ does not force a factor.
\end{thm}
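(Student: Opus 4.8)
The plan is to realize $\poi{w}$ as an interval $[x,y]$ whose minimum $x$ is a single simple reflection $s_{m+1}$, arranged so that no reduced word of $y$ begins or ends with the symbol $m+1$; since $R(x)$ is then a single one-letter word, it cannot be recovered by any factor deletion.

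First I would apply Proposition~\ref{prop:decomp} to write a reduced word of $w$ as ${\sf a_1 a_2}$ with ${\sf a_1}$ using only symbols $\le m_0$ and ${\sf a_2}$ only symbols $>m_0$, and let $u,v$ be the permutations with reduced words ${\sf a_1},{\sf a_2}$, so that $\poi{w}\cong\poi{u}\times\poi{v}$. Setting $m:=\max\mathrm{supp}(u)$ keeps every symbol of ${\sf a_2}$ strictly above $m$ and makes $m\in\mathrm{supp}(u)$. Relabeling the symbols of $v$ by a fixed constant is an order-isomorphism between parabolic subgroups, so after such a shift I may replace $v$ by a permutation $v'$ with $\poi{v'}\cong\poi{v}$, $\mathrm{supp}(v')\subseteq\{m+2,m+3,\dots\}$, and $m+2\in\mathrm{supp}(v')$. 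Working in a large enough symmetric group, I then set $x=s_{m+1}$ and $y=u\,s_{m+1}\,v'$. A direct inversion count gives $\ell(y)=\ell(u)+1+\ell(v')$, so concatenating a reduced word of $u$, the symbol $m+1$, and a reduced word of $v'$ yields a reduced word ${\sf r}\in R(y)$ in which $m+1$ occurs exactly once, in the middle (note $m+1\notin\mathrm{supp}(u)\cup\mathrm{supp}(v')$).

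The first and main task is to prove $[x,y]\cong\poi{u}\times\poi{v'}\cong\poi{w}$, and I would do this by showing that $(a,b)\mapsto a\,s_{m+1}\,b$ is an order isomorphism $\poi{u}\times\poi{v'}\to[x,y]$. The length identity makes each image reduced and places it in $[x,y]$, and injectivity is immediate because the one-line notation of $a\,s_{m+1}\,b$ reads $a(1)\cdots a(m)$ in its first $m$ positions, which recovers $a$ and then $b$. The substance is surjectivity together with order-reflection, for which I would invoke the subword property of the Bruhat order~\cite{bjorner-brenti} applied to the specific word ${\sf r}$: any $z\le y$ has a reduced word that is a subword of ${\sf r}$, and since $z\ge s_{m+1}$ forces $m+1\in\mathrm{supp}(z)$, that subword must retain the unique central symbol $m+1$ and so has the shape (subword of the $u$-part)\,$(m+1)$\,(subword of the $v'$-part), i.e.\ $a\,s_{m+1}\,b$ with $a\le u$ and $b\le v'$. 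I expect this product-structure step to be the principal obstacle, since it is the point at which every element of the interval must be described simultaneously.

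It then remains to check that $[x,y]$ admits no factor deletion. As $R(x)=R(s_{m+1})$ is the single length-one word $m+1$, deleting a factor from some ${\sf j}\in R(y)$ to leave a reduced word of $x$ would retain exactly one symbol, necessarily $m+1$, with every other symbol removed as one consecutive block; that block is consecutive precisely when the retained symbol is the first or last letter of ${\sf j}$. Hence $[x,y]$ forces a factor if and only if $m+1$ is a left or right descent of $y$, and the construction rules both out. A short computation of the one-line notation of $y$ gives $y(i)=u(i)$ for $i\le m$, $y(m+1)=m+2$, and $y(m+2)=v'(m+2)$. Because $m\in\mathrm{supp}(u)$, the permutation $u$ moves $m+1$ (otherwise $u$ would permute $\{1,\dots,m\}$ and have $\mathrm{supp}(u)\subseteq\{1,\dots,m-1\}$), so the value $m+1$ sits in a position $\le m$, to the left of the value $m+2$, which occupies position $m+1$; thus $m+1$ is not a left descent. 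Symmetrically, $m+2\in\mathrm{supp}(v')$ forces $v'(m+2)\ge m+3$, so $y(m+2)>m+2=y(m+1)$ and $m+1$ is not a right descent. Therefore no reduced word of $y$ begins or ends with $m+1$, the interval $[x,y]\cong\poi{w}$ admits no factor deletion, and $w$ does not force a factor.
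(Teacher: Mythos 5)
Your proof is correct, and it shares the paper's constructive blueprint---apply Proposition~\ref{prop:decomp}, pull the two components of $w$ apart by shifting one of them, and insert a ``bridge'' between them so that the resulting interval is isomorphic to $\poi{w}$ while the bridge separates the letters that would have to be deleted---but the execution differs in a substantive way. The paper shifts ${\sf a_2}$ up by exactly $1$ and uses as bridge the whole increasing run ${\sf b}={\sf (k_1+1)(k_1+2)\cdots k_2}$, so the minimum of its interval is $w^-$ with $R(w^-)=\{{\sf b}\}$; it then rules out factor deletions by a brief commutation argument (the letters $k_1$ and $k_2+1$ can never be brought into one factor). You instead shift $v$ far enough that the gap is bridged by the single letter $m+1$, making the minimum $x=s_{m+1}$ with $R(x)$ a one-letter word. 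That choice buys you a fully rigorous and essentially mechanical impossibility argument: deleting a factor so that one letter survives forces that letter to be the first or last letter of ${\sf j}\in R(y)$, i.e.\ forces $m+1$ to be a left or right descent of $y$, and your one-line computations ($y(m+1)=m+2$, $y(m+2)=v'(m+2)\ge m+3$ since $m+2\in\mathrm{supp}(v')$ rules out $v'(m+2)=m+2$, and the value $m+1$ sitting in a position $\le m$ because $m\in\mathrm{supp}(u)$) correctly exclude both descents. You also supply what the paper waves off as ``not hard to see'': the isomorphism $[x,y]\cong\poi{u}\times\poi{v'}\cong\poi{w}$, proved via the subword property applied to the word ${\sf r}$, where the unique occurrence of $m+1$ must survive in any reduced subword representing $z\ge s_{m+1}$, giving the splitting $z=a\,s_{m+1}\,b$ with $a\le u$, $b\le v'$; together with the uniqueness of that splitting this yields surjectivity and order-reflection. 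In short, same skeleton, but your single-letter bridge trades the paper's Coxeter-relation bookkeeping for a clean descent computation, at the modest cost of working in a larger symmetric group; when the gap between the two supports already has size one (as in Example~\ref{ex:2314 doesn't force}), the two constructions coincide.
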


\begin{proof}
Suppose that $w$ is decomposable.  Consider the value $m$ and the reduced word ${\sf a_1a_2} \in R(w)$ guaranteed by Proposition~\ref{prop:decomp}.  We can assume, without loss of generality, that $i=1$ in the proposition.  Let $k_1\le m$ be the largest value appearing in ${\sf a_1}$, and $k_2\ge m+1$ be the smallest value appearing in ${\sf a_{2}}$.

Let ${\sf a_2'}$ be the string obtained from ${\sf a_2}$ by increasing each symbol by $1$.  Also, define ${\sf b} = {\sf (k_1+1) (k_1+2) \cdots (m+1) \cdots (k_2-1)k_2}$, a string of consecutive increasing letters.  Now define $w^- = s_{k_1+1}s_{k_1+2}\cdots s_{m+1} \cdots s_{k_2-1}s_{k_2}$; that is,
$${\sf b} \in R(w^-)$$
It is not hard to see that ${\sf a_1}{\sf b}\,{\sf a_2'}$ is reduced, by construction.  Define $w^+$ so that
$${\sf a_1}{\sf b}\,{\sf a_2'} \in R(w^+).$$
It is not hard to see that
$$\poi{w} \cong [w^-,w^+].$$

Neither ${\sf a_1}$ nor ${\sf a_2'}$ contain any of the letters $\{k_1+1, k_1+2, \ldots, k_2\}$.  Moreover, $k_1 \in {\sf a_1}$ and $k_2+1 \in {\sf a_2'}$.  Thus the Coxeter relations prohibit $k_1 \in {\sf a_1}$ from commuting into or across ${\sf b}$ from the left, and similarly $k_2+1 \in {\sf a_2'}$ cannot do so from the right.  Thus it will be impossible to get $k_1$ and $k_2+1$ into the same factor whose deletion would yield ${\sf b}$.

Therefore $w$ does not force a factor.
\end{proof}

Example~\ref{ex:2314 doesn't force} depicts the procedure outlined in Theorem~\ref{thm:doesn't force}.  In that case, $w = s_1s_2 = 2314$, and so $m=1$, ${\sf a_1} = {\sf 1}$, and ${\sf a_2} = {\sf 2}$.  Then $k_1 = 1$ and $k_2 = 2$, and so ${\sf a_2'} = {\sf 3}$ and the resulting string ${\sf a_1} {\sf b} \,{\sf a_2'} = {\sf 123}$.  Therefore $w^- = s_2 = 1324$ and $w^+ = s_1s_2s_3 = 2341$.  This yields exactly the demonstrative interval $[1324,2341]$ of the example.

Theorem~\ref{thm:doesn't force} says that if a permutation has a principal order ideal that can decompose nontrivially into a direct product of posets, then there are ways for that principal order ideal to appear as an interval in an ``interesting'' way, as described in Remark~\ref{remark}.  In this context, then, the result may not be surprising.  It might even be natural to wonder whether the converse to Theorem~\ref{thm:doesn't force} is also true.  Unfortunately, it is not.

\begin{ex}
Consider the permutation $w = 3412 = s_2s_1s_3s_2 \in \mf{S}_4$.  Because $R(w) = \{{\sf 2132},{\sf 2312}\}$, we see that this $w$ is indecomposable.  It is not hard to check that
$$[12543,52341]\cong\poi{3412}.$$
To show that $w$ does not force a factor, note that $R(12543) = \{{\sf 343},{\sf 434}\}$, and recall $R(52341)$ from Example~\ref{ex:21543 52341}.  There is no element of $R(52341)$ from which a single factor could be deleted to yield either ${\sf 343}$ or ${\sf 434}$.  Thus $w$ does not force a factor.
\end{ex}

\section{Permutations that do force a factor}

In this section we prove that the longest permutation $n(n-1)\cdots 321$ always forces a factor.  With the understanding that the symmetric group is given a poset under the Bruhat order, we will abuse notation slightly and write $\poi{w_0^n} \cong \mf{S}_n$, henceforth.  Thus we now show that for any interval $[x,y]$ appearing in the Bruhat order of a symmetric group satisfying $[x,y] \cong \mf{S}_n$, there exists some ${\sf i} \in R(x)$ that can be obtained from some ${\sf j} \in R(y)$ by deleting a factor.

The main result will be proved inductively, and its proof will benefit from some preliminary results.  The first of these is about generic intervals in the Bruhat order of the symmetric group, not of any fixed isomorphism class.  The proposition concerns the coatoms in an interval $[x,y]$, that is, the elements $w$ of the interval that are covered by $y$ (denoted $x \le w \lessdot y$).

\begin{prop}\label{prop:all positions mentioned in coatoms}
Suppose that $x,y \in \mf{S}_n$ with $x < y$.  Fix $i$ satisfying $x(i) \neq y(i)$.  Then there exists a permutation $w$ with $x \le w \lessdot y$ and $w(i) \neq y(i)$.
\end{prop}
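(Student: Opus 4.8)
The plan is to prove the statement by induction on the length difference $d = \ell(y) - \ell(x)$, reducing the general case to a single rank-two (``diamond'') computation. When $d = 1$ we have $x \lessdot y$, so $x$ is the unique coatom of $[x,y]$ and $x(i) \neq y(i)$ by hypothesis; this is the base case. For $d \geq 2$, first select any coatom $c$ of $[x,y]$ (one exists: the penultimate element of any maximal chain from $x$ to $y$ is covered by $y$ and lies above $x$). If $c(i) \neq y(i)$ we are done. Otherwise $c(i) = y(i)$, and since $x(i) \neq y(i)$ we get $x(i) \neq c(i)$, so $x < c$ with $\ell(c) - \ell(x) = d-1 \geq 1$. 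Applying the induction hypothesis to the interval $[x,c]$ at the same position $i$ produces an element $c'$ with $x \le c' \lessdot c$ and $c'(i) \neq c(i) = y(i)$.

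At this point we have $c' \lessdot c \lessdot y$, so $[c',y]$ is an interval of length two with $c'(i) \neq y(i)$ but $c(i) = y(i)$. The crux is the following diamond lemma: in any length-two interval $[p,q]$ of the Bruhat order on $\mf{S}_n$, the set of positions at which $p$ differs from $q$ is contained in the union, over the two coatoms $m$ of $[p,q]$, of the sets $\{j : m(j) \neq q(j)\}$. To prove it, I would use that every length-two Bruhat interval is a diamond with exactly two elements strictly between $p$ and $q$ \cite{bjorner-brenti}, and that each such coatom $m$, being covered by $q$, satisfies $m = q\,t$ for a single transposition $t$ and hence differs from $q$ in exactly the two positions swapped by $t$. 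Writing $p = q\,t\,t'$ for transpositions $t, t'$, the permutation recording where $p$ and $q$ differ is a product of two transpositions, hence either a product of two disjoint transpositions or a $3$-cycle. In the first case the two coatoms' difference sets are the two disjoint transposed pairs, whose union is the full four-element support; in the second case the two coatoms correspond to two distinct two-element subsets of the three-element support, and any two distinct two-subsets of a three-set already cover it. Either way the inclusion holds.

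Applying the diamond lemma to $[c',y]$ with the distinguished position $i$ (which lies in the difference set of $c'$ and $y$), we conclude that $i$ belongs to the difference set of at least one of the two coatoms of $[c',y]$. One of those coatoms is $c$, which agrees with $y$ at position $i$; hence the other coatom, say $c''$, satisfies $c''(i) \neq y(i)$. Since $c'' \ge c' \ge x$ and $c'' \lessdot y$, the permutation $w = c''$ is a coatom of $[x,y]$ with $w(i) \neq y(i)$, completing the induction. I expect the main obstacle to be the diamond lemma itself: one must invoke that a length-two interval really is a diamond and then carry out the short case analysis on whether the relevant product of two transpositions is a double transposition or a $3$-cycle, verifying in each case that the two coatom difference sets jointly cover every position at which the bottom and top of the interval disagree. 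The surrounding induction is then routine.
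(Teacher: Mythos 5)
Your proof is correct and follows essentially the same route as the paper's: induction on $\ell(y)-\ell(x)$, picking a coatom, applying the inductive hypothesis one level down, and then resolving the resulting rank-two diamond by a case analysis that is exactly the paper's dichotomy (your ``two disjoint transpositions'' case is the paper's ``differ in four positions'' case, and your ``$3$-cycle'' case is its ``differ in three positions'' case). Packaging the diamond analysis as a standalone lemma and justifying the coatom structure via factorizations of $q^{-1}p$ into transpositions is a slightly more explicit write-up of the same argument, not a different approach.
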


\begin{proof}
We prove the result by induction on $\ell(y) - \ell(x)$.

If $\ell(y) - \ell(x) = 1$, then set $w=x$.  Now consider $\ell(y) - \ell(x) > 1$, and suppose inductively that the result is true for all intervals of length less than $\ell(y) - \ell(x)$.

Fix some $v$ satisfying $x \le v \lessdot y$.  If $v(i) \neq y(i)$, then set $w=v$ and we are done.  If, instead, $v(i) = y(i)$, then we can apply the inductive assumption to the interval $[x,v]$.  This yields a permutation $u$ with $x \le u \lessdot v$ and $u(i) \neq v(i)$.  Consider the interval $[u,y]$.  As described in Table~\ref{table:counting intervals and pois}, this has only one possible form, and includes a fourth element which will denote $w$.
\begin{center}
\begin{tikzpicture}[scale=.5]
\draw (0,0) -- (1,1) -- (0,2) -- (-1,1) -- (0,0);
\foreach \x in {(0,0),(1,1),(0,2),(-1,1)} {\fill[black] \x circle (5pt);}
\draw (0,0) node[below] {$u$};
\draw (0,2) node[above] {$y$};
\draw (-1,1) node[left] {$v$}; 
\draw (1,1) node[right] {$w$};
\end{tikzpicture}
\end{center}
Because $\ell(y) - \ell(u) = 2$, the permutations $u$ and $y$ differ, as strings, in either three or four positions, one of which is necessarily position $i$.

If $u$ and $y$ differ in four positions, then $u$ and $v$ differ in two positions ($i$ and $j$, for some $j$), and $v$ and $y$ differ in two other positions.  The two transpositions commute, and so $w$ is obtained from $y$ by swapping the values in positions $i$ and $j$.  In other words, $w(i) \neq y(i)$.

Suppose, on the other hand, that $u$ and $y$ differ in just three positions: $i$, $j_1$, and $j_2$.  Then, because $v(i) = y(i)$, we have that $v$ and $y$ differ in positions $j_1$ and $j_2$.  Because $w \neq v$, the two positions in which $w$ and $y$ differ, which must be a subset $\{i,j_1,j_2\}$, cannot be both $j_1$ and $j_2$.  Thus, one of them must be $i$, meaning that $w(i) \neq y(i)$.
\end{proof}

We now focus on a particular kind of interval in the symmetric group, and look at what such an interval implies for the reduced words of its minimum and maximum elements.

\begin{defn}
Let ${\sf s}$ be a string of integers, and $t \in \mathbb{Z}$.  The \emph{shift} of ${\sf s}$ by $t$ is the string ${\sf s}^t$ obtained by adding $t$ to each of the values in ${\sf s}$.
\end{defn}

\begin{ex}
$({\sf 5 \ -\!1 \ 0})^4 = {\sf 9 \ 3 \ 4}$ and $({\sf 5 \ -\!1 \ 0})^{-4} = {\sf 1 \ -\!5 \ -\!4}$.
\end{ex}

\begin{prop}\label{prop:symbols in the factor b}
Fix a positive integer $k$.  Suppose that $x,y \in \mf{S}_n$ have reduced words ${\sf ac} \in R(x)$ and ${\sf abc} \in R(y)$, and that $[x,y] \cong \mf{S}_k$.  Then there exists an integer $t \ge 0$ such that ${\sf b}^{-t} \in R(w_0^k)$.
\end{prop}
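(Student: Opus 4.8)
The plan is to reduce the statement to a fact about the \emph{support} of the permutation represented by ${\sf b}$, and then close with a short convexity estimate. Write $x=\alpha\gamma$ and $y=\alpha z\gamma$, where ${\sf a}\in R(\alpha)$, ${\sf b}\in R(z)$, ${\sf c}\in R(\gamma)$. Since ${\sf abc}$ is reduced, $\ell(y)=\ell(\alpha)+\ell(z)+\ell(\gamma)$, so $\ell(y)-\ell(x)=\ell(z)=|{\sf b}|$. Because $[x,y]\cong\mf{S}_k$ is graded of rank $\ell(w_0^k)=\binom{k}{2}$, we get $\ell(z)=\binom{k}{2}$; and as a factor of a reduced word, ${\sf b}$ is itself reduced, so $z$ is a genuine permutation of this length. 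Let $J$ be the set of letters occurring in ${\sf b}$. Since the set of generators appearing in a reduced word is an invariant of the element (commutations and braid moves preserve it), $J$ equals the support $\{i:s_i\le z\}$, and $z$ lies in the parabolic subgroup $W_J\cong\prod_j\mf{S}_{b_j+1}$, where the $b_j\ge 1$ are the sizes of the maximal runs of consecutive integers in $J$, so that $\sum_j b_j=|J|$. It therefore suffices to prove that $|J|=k-1$, that $J$ is an interval, and that $z$ is the longest element of $W_J$; a shift by the appropriate $t$ then carries $J$ to $\{1,\dots,k-1\}$ and $z$ to $w_0^k$.

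The finishing engine is a convexity count, and it is entirely self-contained. The maximal length in $W_J$ is $\sum_j\binom{b_j+1}{2}$, and the identity $\binom{|J|+1}{2}=\sum_j\binom{b_j+1}{2}+\sum_{j<j'}b_jb_{j'}$ shows that
\[
\binom{k}{2}=\ell(z)\ \le\ \sum_j\binom{b_j+1}{2}\ \le\ \binom{|J|+1}{2},
\]
where the second gap $\sum_{j<j'}b_jb_{j'}$ is strictly positive unless $J$ is a single run. The right inequality already gives $|J|\ge k-1$ for free. If one can also establish $|J|\le k-1$, then $|J|=k-1$ forces both inequalities above to be equalities: equality on the right forces a single run (so $J$ is an interval and $W_J\cong\mf{S}_k$), and equality on the left forces $\ell(z)$ to be maximal in $W_J$ (so $z=w_0(W_J)$). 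Thus consecutiveness comes for free, and the entire proof collapses to the single bound $|J|\le k-1$.

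This bound is the main obstacle, and it must genuinely use the isomorphism type $[x,y]\cong\mf{S}_k$, not merely the factorization. The natural idea is that $\mf{S}_k\cong\poi{w_0^k}$ has exactly $k-1$ atoms (namely $s_1,\dots,s_{k-1}$), so $[x,y]$ has exactly $k-1$ atoms, and one wants the $|J|$ distinct letters of ${\sf b}$ to produce at least $|J|$ distinct atoms of $[x,y]$, forcing $|J|\le k-1$. The difficulty is that the obvious assignment $s_i\mapsto\alpha s_i\gamma$ need not work: inserting a single generator of ${\sf b}$ between $\alpha$ and $\gamma$ can \emph{decrease} length and leave the interval entirely. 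In fact one cannot hope for the cleaner statement $[x,y]\cong\poi{z}$: deleting the factor ${\sf b}={\sf 21}$ from ${\sf abc}={\sf 121}$ (with ${\sf a}={\sf 1}$ and ${\sf c}$ empty) yields $[s_1,w_0^3]$, a rank-two Boolean lattice on four elements, which is \emph{not} isomorphic to $\poi{w_0^3}=\mf{S}_3$, even though here $z=w_0^3$. So the hypothesis $[x,y]\cong\mf{S}_k$ is doing essential work in ruling out $|J|\ge k$.

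To carry out the crux I would run the coatom tool of Proposition~\ref{prop:all positions mentioned in coatoms}: for each position in which $x$ and $y$ differ there is a coatom of $[x,y]$ witnessing that difference, and the number of coatoms of $\mf{S}_k$ is again $k-1$. The plan is to track how the $|J|$ distinct generators of ${\sf b}$ each force an independent covering (equivalently, differing) direction near the bottom of $[x,y]$, using the coatom proposition to certify that these directions are distinct and that they cannot all be accommodated once $|J|$ exceeds the $k-1$ covers permitted by the $\mf{S}_k$-shape. Making this bookkeeping precise—converting ``$J$ distinct generators'' into ``at least $|J|$ atoms that remain inside $[x,y]$'' despite the possible length drops illustrated above—is the step I expect to require the most care, and it is where the full strength of $[x,y]\cong\mf{S}_k$ (its $k!$ elements and $k-1$ atoms, rather than just its rank) has to be invoked.
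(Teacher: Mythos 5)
Your proposal follows the same outline as the paper's proof: compute $|{\sf b}|=\ell(y)-\ell(x)=\binom{k}{2}$, pin the number of distinct letters of ${\sf b}$ to exactly $k-1$, and then let length considerations force those letters to be consecutive and ${\sf b}$ to be a shift of a reduced word of $w_0^k$. The parts you actually prove are correct, and your convexity computation (the identity $\binom{|J|+1}{2}=\sum_j\binom{b_j+1}{2}+\sum_{j<j'}b_jb_{j'}$, which yields $|J|\ge k-1$ and shows that $|J|=k-1$ forces a single run of consecutive letters with $z$ the longest element of $W_J$) is more explicit than anything the paper writes down for those steps.

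However, the proof is incomplete at exactly the point you flag yourself: you never establish $|J|\le k-1$. What you offer instead is a plan --- convert the $|J|$ distinct letters of ${\sf b}$ into distinct atoms or coatoms of $[x,y]$ and compare against the $k-1$ atoms/coatoms of $\mf{S}_k$ --- together with an admission that its central difficulty (inserting a single letter of ${\sf b}$ into ${\sf ac}$ can drop length, so letters do not obviously produce elements of $[x,y]$, let alone distinct atoms) is unresolved. That bound is the one step where the hypothesis $[x,y]\cong\mf{S}_k$ does real work, and it is precisely the step the paper closes (tersely, by asserting that a reduced factor with more than $k-1$ distinct letters would prevent $[x,y]$ from being isomorphic to $\poi{w_0^k}$); without it your argument establishes nothing beyond $|J|\ge k-1$. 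A secondary error: the example you give to show $[x,y]\not\cong\poi{z}$ does not work. With ${\sf a}={\sf 1}$, ${\sf b}={\sf 21}$, ${\sf c}=\emptyset$ one has $z=s_2s_1$, not $w_0^3$, and in fact $[s_1,w_0^3]\cong\poi{s_2s_1}$ --- both are rank-two diamonds --- so no discrepancy is exhibited. The phenomenon you want is real but needs a different witness: take ${\sf a}={\sf 1}$, ${\sf b}={\sf 321}$, ${\sf c}={\sf 3}$, so that ${\sf abc}={\sf 13213}\in R(4231)$ and ${\sf ac}={\sf 13}\in R(2143)$; then $[2143,4231]$ is the ten-element interval of Figure~\ref{fig:2143,4231}, while $\poi{s_3s_2s_1}$ is a Boolean lattice with eight elements.
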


\begin{proof}
The length of ${\sf b}$ is equal to $\ell(y) - \ell(x) = \ell(w_0^k) = \binom{k}{2}$.  Because ${\sf b}$ is a reduced word, it cannot contain fewer than $k-1$ distinct symbols.  Moreover, if it were to contain more than $k-1$ distinct symbols, then $[x,y]$ would not be isomorphic to $\poi{w_0^k}$.  Thus ${\sf b}$ contains exactly $k-1$ distinct symbols.  In order to form a reduced word of length $\ell(w_0^k)$ out of $k-1$ distinct symbols, that reduced word must actually be the shift by $t$ of a reduced word for $w_0^k$, where $t+1$ be the smallest symbol in ${\sf b}$.
\end{proof}

\begin{ex}
In the language of Proposition~\ref{prop:symbols in the factor b}, let $x = 21534$ and $y = 24531$, with reduced words ${\sf 143} \in R(x)$ and ${\sf 123243} \in R(y)$. The interval $[x,y] \subset \mf{S}_5$ is isomorphic to $S_3$, as drawn in Figure~\ref{fig:s3 interval}. In this example, $t = 1$ because ${\sf 232}^{-1} \in R(321)$.
\end{ex}

\begin{figure}[htbp]
\begin{tikzpicture}
\draw (0,0) coordinate (21534);
\draw (-1.5,1.5) coordinate (23514);
\draw (1.5,1.5) coordinate (21543);
\draw (-1.5,3) coordinate (23541);
\draw (1.5,3) coordinate (24513);
\draw (0,4.5) coordinate (24531);
\foreach \x in {21534, 23514, 21543, 23541, 24513, 24531} {\fill[black] (\x) circle (2pt);}
\foreach \x in {23514, 23541} {\draw (\x) node[left] {$\x$};}
\foreach \x in {21543, 24513} {\draw (\x) node[right] {$\x$};}
\draw (21534) node[below] {$21534$};
\draw (24531) node[above] {$24531$};
\draw (21534) -- (23514) -- (23541) -- (24531) -- (24513) -- (21543) -- (21534);
\draw (23514) -- (24513);
\draw (21543) -- (23541);
\end{tikzpicture}
\caption{The interval $[21534, 24531] \subset \mf{S}_5$, which is isomorphic to $\mf{S}_3$.}\label{fig:s3 interval}
\end{figure}
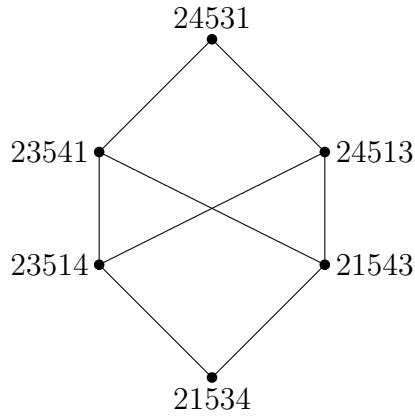

The next two propositions require an additional definition.

\begin{defn}
Fix a string ${\sf s}$.  Consider a monotonic substring ${\sf s'}$ of ${\sf s}$ with smallest value $a$ and largest value $b$ (the endpoints of the monotonic substring).  This ${\sf s'}$ is \emph{thin} if no value $c \not\in {\sf s'}$, with $a < c < b$, appears between $a$ and $b$ in ${\sf s}$.
\end{defn}

\begin{ex}
Let ${\sf s} = {\sf 9 1 4 0 2 3 6 5}$.  The monotonic substring ${\sf 0 2 3 5}$ is thin, while the monotonic substring ${\sf 9 1 0}$ is not thin because of the ${\sf 4}$ appearing between ${\sf 9}$ and ${\sf 0}$ in ${\sf s}$.
\end{ex}

\begin{prop}\label{prop:swap-string}
Fix a positive integer $k$.  Suppose that $x,y \in \mf{S}_n$ have reduced words ${\sf ac} \in R(x)$ and ${\sf abc} \in R(y)$, with ${\sf b}^{-t} \in R(w_0^k)$ for some integer $t \ge 0$.  Then $x$ and $y$, as strings, are identical outside of a thin monotonic substring of length $k$, which appears in increasing order in $x$ and decreasing order in $y$.
\end{prop}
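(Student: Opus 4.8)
The plan is to translate everything into the language of permutations-as-products and then read off the structure from three length computations, reserving the thinness claim for an inversion count at the very end. Write $\alpha,\beta,\gamma$ for the permutations with reduced words ${\sf a},{\sf b},{\sf c}$, so that $x=\alpha\gamma$ and $y=\alpha\beta\gamma$. Since ${\sf b}^t\in R(w_0^k)$, the permutation $\beta$ is the block reversal on the consecutive window $\{t+1,\ldots,t+k\}$: it sends $t+j\mapsto t+k+1-j$ and fixes everything else. The one fact I would use repeatedly is that every (contiguous) factor of a reduced word is reduced, so both ${\sf ab}$ and ${\sf bc}$ are reduced and $\ell(y)-\ell(x)=|{\sf b}|=\binom{k}{2}$.

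First I would locate the window inside $x$ and $y$. Because ${\sf ab}$ is reduced, $\ell(\alpha\beta)=\ell(\alpha)+\binom{k}{2}$; since right multiplication by $\beta$ merely reverses the values of $\alpha$ in positions $t+1,\ldots,t+k$ and leaves all inversions with outside positions untouched, length-additivity forces those values to have been increasing, so $w_r:=\alpha(t+r)$ satisfies $w_1<\cdots<w_k$. Symmetrically, because ${\sf bc}$ is reduced, left multiplication by $\beta$ reverses the values of $\gamma$ lying in $\{t+1,\ldots,t+k\}$, and length-additivity forces the positions $i_r:=\gamma^{-1}(t+r)$ to satisfy $i_1<\cdots<i_k$. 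Evaluating, $x(i_r)=\alpha(\gamma(i_r))=w_r$ and $y(i_r)=(\alpha\beta)(t+r)=w_{k+1-r}$, while at every position $p\notin\{i_1,\ldots,i_k\}$ one has $\gamma(p)\notin\{t+1,\ldots,t+k\}$ and hence $x(p)=y(p)$. This already exhibits a monotonic substring of length $k$, on the common value set $W=\{w_1<\cdots<w_k\}$, increasing in $x$ and decreasing in $y$, with $x$ and $y$ agreeing elsewhere.

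The remaining, and genuinely delicate, point is thinness: no value $c\notin W$ with $w_1<c<w_k$ may sit at a position strictly between $i_1$ and $i_k$. For this I would compare the inversions of $x$ and $y$ directly. A pair of positions both drawn from $\{i_1,\ldots,i_k\}$ contributes $0$ inversions in $x$ and $\binom{k}{2}$ in $y$, while a pair of positions both avoiding this set contributes equally to each. Since $\ell(y)-\ell(x)=\binom{k}{2}$ is accounted for entirely by the first type, the net contribution of the mixed pairs (one position in the window, one outside) must vanish. A short case analysis on the rank $e$ of $c$ among $W$ (that is, $w_e<c<w_{e+1}$) and the number $d$ of window positions lying left of $c$ shows that each mixed contribution equals $\min(d,k-e)+\min(k-d,e)-|d-e|$, which is nonnegative and is strictly positive exactly when $1\le e\le k-1$ and $1\le d\le k-1$, i.e. exactly when an intermediate value sits strictly between $i_1$ and $i_k$. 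Forcing the total to be zero therefore rules out any such value, which is precisely thinness. This inversion bookkeeping is where I expect the main obstacle to lie: the first two paragraphs are routine consequences of length-additivity, but controlling the sign of the mixed contribution is what actually pins down the thin adjective and is the step that must be carried out with care.
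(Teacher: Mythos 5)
Your proof is correct, and it takes a genuinely different route from the paper's. The paper argues by induction on the length of the prefix ${\sf a}$: when ${\sf a}=\emptyset$, left multiplication gives $y=\big(12\cdots t\,(t+k)\cdots(t+1)\big)x$, so the swap-string consists of the consecutive values $t+1,\ldots,t+k$ and thinness is automatic; in the inductive step one writes ${\sf a}={\sf u}\,{\sf a'}$, applies the hypothesis to $x'=s_ux$ and $y'=s_uy$, and uses reducedness (which forces the value $u$ to lie left of $u+1$ in both $x'$ and $y'$, so at most one of $u,u+1$ lies in the swap-string) to argue that swapping $u$ and $u+1$ disturbs neither monotonicity nor thinness. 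You avoid induction altogether: length-additivity of the reduced factors ${\sf ab}$ and ${\sf bc}$ pins down that $\alpha$ increases on the window positions and that the window values occupy increasing positions in $\gamma$, which lets you exhibit the swap-string explicitly ($w_r=\alpha(t+r)$ at position $i_r=\gamma^{-1}(t+r)$); then thinness follows from your inversion bookkeeping, in which within-window pairs account for all of $\ell(y)-\ell(x)=\binom{k}{2}$, outside pairs cancel, and each per-position mixed contribution $\min(d,k-e)+\min(k-d,e)-|d-e|$ is nonnegative and strictly positive exactly when thinness fails; I verified the case analysis, and in fact the contribution equals $2\min(d,e,k-d,k-e)$, which makes both claims transparent. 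Your approach buys an explicit description of the swap-string and a fully quantitative proof of thinness --- precisely the point the paper treats most lightly, since its inductive step dismisses preservation of thinness with a single sentence --- at the cost of more bookkeeping; the paper's induction is shorter per step but leaves more to the reader. (One cosmetic note: your convention that ${\sf b}$ uses symbols $\{t+1,\ldots,t+k-1\}$ follows the paper's own internal usage rather than the literal sign of the shift in the hypothesis; since $t$ ranges over all of $\mathbb{Z}$, this is immaterial.)
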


To ease the discussion, we will call this monotonic substring that distinguishes $x$ from $y$ the \emph{swap-string}.  Note that if $k$ is odd, then $x$ and $y$ will also be identical in the central position of the swap-string.

\begin{proof}
We prove the result by induction on the length of ${\sf a}$.

If ${\sf a} = \emptyset$, then
$$y = \big(12\cdots t(t+k)\cdots(t+2)(t+1)\big)x.$$
Thus $x$ and $y$ only differ, as strings, in the subsequence involving $\{t+1,\ldots,t+k\}$, which necessarily appears in increasing order in $x$ (because ${\sf bc}$ is reduced, so multiplying ${\sf c}$ by the permutation corresponding to ${\sf b}$ cannot undo any inversions) and decreasing order in $y$.  Because there are no values between $t+1$ and $t+k$ that are not already in the swap-string, the result holds.

Now suppose that ${\sf a} = {\sf u a'}$ where ${\sf u}$ is a single letter.  Let $x' = s_ux$ and $y' = s_uy$, and assume inductively that the result holds for ${\sf a'c} \in R(x')$ and ${\sf a'bc} \in R(y')$.  This means that the strings $x'$ and $y'$ are identical except for a substring of length $k$ whose values appear in increasing order in $x'$ and decreasing order in $y'$, and these swap-strings are thin.

Because ${\sf ac}$ and ${\sf abc}$ are both reduced words, the value $u$ must appear to the left of $u+1$ in both $x'$ and $y'$.  Thus at most one of $\{u,u+1\}$ appears in the swap-string for $x'$ and $y'$, so swapping the positions of the two values cannot change the monotonicity of the differentiating substrings.  In other words, lengthening the prefix might change the specific values that differ in the two strings, but cannot alter the swap-string phenomenon.

Similarly, the thinness of the swap-string is maintained by this operation.
\end{proof}

The converse to Proposition~\ref{prop:swap-string} is also true.  Its proof is similar to the main result of \cite{rdpp}, and we offer the broad strokes of it here.

\begin{prop}\label{prop:swap-string means factor}
Suppose that $x,y \in \mf{S}_n$ are identical, as strings, outside of a thin monotonic substring of length $k$, which appears in increasing order in $x$ and decreasing order in $y$.  Then there exist reduced words ${\sf ac} \in R(x)$ and ${\sf abc} \in R(y)$, with ${\sf b}^{-t} \in R(w_0^k)$ for some integer $t \ge 0$.
\end{prop}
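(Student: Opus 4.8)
The plan is to recast the combinatorial hypothesis as an algebraic factorization and then build the prefix and suffix one letter at a time. Write the swap-string positions as $p_1 < \cdots < p_k$ and its values as $v_1 < \cdots < v_k$, so that $x(p_i) = v_i$, $y(p_i) = v_{k+1-i}$, and $x$ and $y$ agree at every other position; the monotonicity hypothesis is exactly what guarantees $y = \tau x$, where $\tau$ is the involution reversing the values $v_1, \ldots, v_k$ among themselves and fixing all others. Producing reduced words ${\sf ac} \in R(x)$ and ${\sf abc} \in R(y)$ is the same as producing permutations $\alpha, \beta, \gamma$ with $x = \alpha\gamma$, $y = \alpha\beta\gamma$, and $\ell(x) = \ell(\alpha) + \ell(\gamma)$, $\ell(y) = \ell(\alpha) + \ell(\beta) + \ell(\gamma)$. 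Since $\tau = yx^{-1} = \alpha\beta\alpha^{-1}$, the condition ${\sf b}^t \in R(w_0^k)$ becomes the single requirement that the conjugate $\beta = \alpha^{-1}\tau\alpha$ reverse a block of $k$ \emph{consecutive} values $\{t+1, \ldots, t+k\}$. So the whole problem is to find a prefix $\alpha$, assembled from genuinely length-adding letters, that conjugates the possibly spread-out reversal $\tau$ into a consecutive one.

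First I would record that $\ell(y) - \ell(x) = \binom{k}{2}$: the thinness hypothesis forces every foreign value to contribute equally many inversions to $x$ and to $y$, so the only length difference comes from the $\binom{k}{2}$ inversions internal to the swap-string. This pins down the base case. When the swap-string values are already consecutive, $v_i = t + i$, there are no foreign values in their range, $\tau$ is itself the consecutive reversal of $\{t+1, \ldots, t+k\}$ with $\ell(\tau) = \binom{k}{2}$, and one may take ${\sf a} = \emptyset$, let ${\sf b}$ be the shift by $t$ of the staircase reduced word for $w_0^k$, and let ${\sf c}$ be any reduced word for $x$; the count above makes ${\sf bc}$ reduced, and deleting ${\sf b}$ returns ${\sf c} \in R(x)$.

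For the general case I would induct on the number $M$ of foreign values lying strictly between $v_1$ and $v_k$, peeling a single simple reflection into the prefix (a left multiplication, which swaps two values) so as to compress the swap-string values toward one another. The effect of prepending $s_u$ is to replace $\tau$ by $s_u \tau s_u$, the reversal of the altered value set; choosing $u$ so that $s_u$ exchanges an extreme swap-value with an adjacent foreign value shrinks the range $v_k - v_1$ and hence $M$. This is where thinness is essential: it guarantees that every foreign value strictly between $v_1$ and $v_k$ lies positionally outside the interval $[p_1, p_k]$, which is precisely what makes the required exchange a length-\emph{decreasing} peel, so the letter can be prepended reducedly, rather than a length-increasing one. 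When the convenient foreign value sits on the wrong side of the block, I would first append length-decreasing letters to the suffix ${\sf c}$ — right multiplications, which only permute positions and leave $M$ unchanged — to slide that value to the correct side, and then perform the value-compression peel. Each completed step yields a strictly smaller instance $(x', y')$ that again differs in a thin monotone swap-string of length $k$, and the inductive hypothesis finishes the construction; this mirrors, in reverse, the inductive step of Proposition~\ref{prop:swap-string}, and is patterned on the construction in \cite{rdpp}, where the decreasing swap-string is exactly a vexillary ($2143$-avoiding) pattern occurrence whose reduced word can be extracted as a factor.

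The main obstacle is the reducedness bookkeeping and the coordination of the two kinds of peels: I must verify that at every stage there is a length-decreasing reflection making progress on a combined measure (the value defect $M$ together with a positional term controlling which side each foreign value occupies), that each peeled letter really is length-decreasing for $x$ and $y$ \emph{simultaneously}, and that the central factor is never absorbed into a Coxeter relation — so that the extracted ${\sf b}$ remains exactly a shifted reduced word for $w_0^k$ and its deletion recovers \emph{precisely} $x$ rather than some other permutation agreeing with $x$ off the swap-string. Thinness is the hypothesis that prevents a foreign value from threading through the block and forcing such an absorption; confirming that it is enough to keep the prefix length additive throughout is the crux of the argument.
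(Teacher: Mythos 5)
Your algebraic reformulation is correct (producing ${\sf ac}$ and ${\sf abc}$ is equivalent to a length-additive factorization $x=\alpha\gamma$, $y=\alpha\beta\gamma$ with $\beta=\alpha^{-1}\tau\alpha$ a reversal of consecutive values), your inversion count $\ell(y)-\ell(x)=\binom{k}{2}$ is right, and your base case (swap-string values already consecutive) is sound. The gap is in the inductive step: the move set you describe --- compression peels $s_u$ exchanging an extreme swap-value with a value-adjacent foreign value, plus right multiplications (common descents of $x$ and $y$) to slide a wrongly-placed foreign value to the other side --- is provably insufficient. Take $x=3142$ and $y=3412$ in $\mf{S}_4$: the swap-string is $\{1,4\}$ at positions $2,3$ (thin, $k=2$), with in-range foreign values $3$ at position $1$ and $2$ at position $4$. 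Neither compression peel is length-decreasing in both permutations ($3$ precedes $4$, and $1$ precedes $2$, in both), and \emph{no} right multiplication is a common descent ($s_1$ and $s_3$ are descents of $x$ but not $y$, $s_2$ is a descent of neither). Yet $\{1,4\}$ is not consecutive, so this is not your base case: your procedure cannot take a single step. The obstruction is structural, not an artifact of move order: sliding the foreign value $3$ leftward past the swap value $4$, or $2$ rightward past $1$, necessarily creates an inversion in one of the two permutations, because an in-range foreign value compares differently with the two swap values it would have to cross. What unblocks this instance is a move you never allow --- the left multiplication $s_2$ swapping the two \emph{foreign} values $2$ and $3$ --- after which your compressions do succeed; but then the proof needs a termination argument for this enlarged move set, which you have not supplied. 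You flagged exactly this coordination issue as ``the crux,'' and it is in fact where the argument breaks.

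It is worth contrasting this with the paper's proof, which normalizes positions rather than values and thereby sidesteps the problem entirely. There, thinness is used to note that any foreign value lying positionally inside the span of the swap-string must be valued outside the range $[v_1,v_k]$; such a value compares the same way with \emph{every} swap value, so pushing it positionally out of the span (too-large values to the right, too-small values to the left) by right multiplications removes one inversion from $x$ and from $y$ simultaneously at each step. Once the span is cleared, the swap-string occupies consecutive positions, and ${\sf b}$ is read off as the reduced word sorting that decreasing block, i.e.\ a shifted reduced word for $w_0^k$ indexed by positions; the in-range foreign values --- the ones that wreck your scheme --- are never touched, since thinness guarantees they already sit outside the span. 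If you want to salvage your value-compression approach, you would need to add foreign-foreign peels and prove termination of the combined procedure; the positional normalization avoids all of this.
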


\begin{proof}
We will transform both $x$ and $y$ into the identity permutation, by minimally many simple reflections, thus obtaining reduced words for each.

Look for any values sitting in between the endpoints of the swap-string that are not actually in the swap-string themselves.  Because the swap-strings are thin, each of these values is either smaller than the minimum value of the swap-string or larger than the maximum value of the swap-string.  Identically multiply $x$ and $y$ on the right by a succession of simple reflections (thus swapping the values in adjacent positions in the strings) to move all of the too-large (respectively, too-small) values to the right (respectively, left) of the swap-string.  The order in which these values are moved can be chosen so that each multiplication removes exactly one inversion.  Let ${\sf C}$ be the reduced word corresponding to the product of these multiplied simple reflections.

We now have two permutations $x'\le x$ and $y'\le y$ that are identical, as strings, outside of a swap-string of length $k$, which appears in increasing order in $x'$ and decreasing order in $y'$.  Moreover, the swap-string is a factor in each of $x'$ and $y'$.  We can now multiply $y'$ on the right by a succession of simple reflections, each of which removes exactly one inversion from the permutation, to put this decreasing factor into increasing order and thus yield $x'$.  This will correspond to a reduced word ${\sf b}$.  Moreover, ${\sf b}$ necessarily satisfies ${\sf b}^{-t} \in R(w_0^k)$, where the leftmost symbol in the swap-string appears in the $(t-1)$st position.  Fix some ${\sf a} \in R(x')$.

Let ${\sf c}$ be the string obtained by writing ${\sf C}$ in reverse order.  Then ${\sf ac} \in R(x)$ and ${\sf abc} \in R(y)$, with ${\sf b}^{-t} \in R(w_0^k)$ for some integer $t \ge 0$, as desired.
\end{proof}

We are now able to prove the main result of this section, describing a family of permutations that force factors.

\begin{thm}\label{thm:forces}
For all integers $n > 1$, the permutation $w_0^n = n(n-1) \cdots 321$ forces a factor.
\end{thm}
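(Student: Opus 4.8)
The plan is to reduce Theorem~\ref{thm:forces} to a purely combinatorial statement about one-line notation and then prove that statement by induction on $n$. By Definition~\ref{defn:forcing a factor}, I must show that every interval $[x,y]$ in the Bruhat order of any $\mf{S}_N$ with $[x,y]\cong \mf{S}_n$ admits reduced words ${\sf i}\in R(x)$ and ${\sf j}\in R(y)$ such that ${\sf i}$ is obtained by deleting a factor from ${\sf j}$. Proposition~\ref{prop:swap-string means factor} converts exactly this conclusion into a statement about the permutations themselves: it suffices to prove that whenever $[x,y]\cong\mf{S}_n$, the permutations $x$ and $y$ are identical, as strings, outside a thin monotonic substring of length $n$ that is increasing in $x$ and decreasing in $y$. (Propositions~\ref{prop:symbols in the factor b} and~\ref{prop:swap-string} show that no other mechanism could produce such a factor, so this swap-string condition is precisely the right target.) Thus the entire theorem rests on establishing this swap-string property for every interval isomorphic to $\mf{S}_n$, after which a single appeal to Proposition~\ref{prop:swap-string means factor} finishes the argument.

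First I would treat the base case $n=2$. Here $[x,y]\cong\mf{S}_2$ simply says $x\lessdot y$ is a cover in the Bruhat order, and the standard description of Bruhat covers in $\mf{S}_N$ states that $y$ is obtained from $x$ by transposing two positions whose values have no intervening value between them; this is exactly a thin monotonic swap-string of length $2$. For the inductive step I would take $[x,y]\cong\mf{S}_n$ and set $D=\{i : x(i)\neq y(i)\}$, the positions that distinguish the two permutations. The role of Proposition~\ref{prop:all positions mentioned in coatoms} is to control $D$: since $[x,y]\cong\mf{S}_n$ has exactly $n-1$ coatoms and each coatom $w$ satisfies $y\gtrdot w$, every coatom differs from $y$ in exactly two positions (a single thin swap), and the proposition shows that every position of $D$ is moved by some coatom. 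Combining this with the $A_{n-1}$ incidence pattern of the coatoms --- which is visible in the interval because $[x,y]\cong\mf{S}_n$ --- I would argue that $D$ consists of exactly $n$ positions, which then serve as the support of the candidate swap-string.

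The structural core is to show that on $D=\{P_{(1)}<\cdots<P_{(n)}\}$ the permutation $x$ is increasing, $y$ is decreasing, and $y$ reverses the values of $x$ there. This I expect to be the main obstacle, since it requires turning the abstract poset isomorphism $[x,y]\cong\mf{S}_n$ into rigid data about one-line notation. The lever is the length identity $\ell(y)-\ell(x)=\binom{n}{2}$, which equals the number of pairs of positions inside $D$; because $x$ and $y$ agree off $D$, this forces every within-$D$ pair to pass from a non-inversion in $x$ to an inversion in $y$ while contributing no net change across pairs that meet the complement of $D$. To make this rigorous I would organize the induction by passing to an interval $[x,w]$ for a suitable coatom $w$, applying the inductive hypothesis to recover a thin reversed block on $n-1$ of the positions, and then using Proposition~\ref{prop:all positions mentioned in coatoms} to show that the remaining position extends the block monotonically rather than allowing foreign material to intrude elsewhere; closing this last point cleanly is the delicate part of the whole proof.

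Finally, granting the monotonicity and reversal on $D$, thinness follows from a short inversion count, which I expect to be routine. If some foreign value $c$ with $x(P_{(1)})<c<x(P_{(n)})$ occupied a position strictly between $P_{(1)}$ and $P_{(n)}$, then reversing the block would change the number of inversions between $c$ and the block: writing $v_1<\cdots<v_n$ for the block values, $v_k<c<v_{k+1}$, and letting the position of $c$ fall between the $m$-th and $(m+1)$-th block positions, the count moves from $|m-k|$ to $\min(m,n-k)+\min(n-m,k)$, and the latter is strictly larger for all $1\le k,m\le n-1$. This strict increase contradicts $\ell(y)-\ell(x)=\binom{n}{2}$, so no such $c$ exists and the swap-string is thin. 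With the swap-string property established, Proposition~\ref{prop:swap-string means factor} produces the desired factor, completing the proof that $w_0^n$ forces a factor.
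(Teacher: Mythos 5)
Your reduction to the swap-string property via Proposition~\ref{prop:swap-string means factor} is exactly the paper's target, your base case is fine, and your closing inversion count for thinness is correct. But there is a genuine gap precisely where you flag ``the delicate part'': the structural core --- that $x$ and $y$ differ in exactly $n$ positions, increasing in $x$ and decreasing in $y$ --- is never established, and the inductive mechanism you sketch for it cannot work as stated. You propose to pass to an interval $[x,w]$ for a coatom $w$ of $[x,y]$ and apply the inductive hypothesis there. The inductive hypothesis, however, concerns intervals isomorphic to $\mf{S}_{n-1}$, and no interval $[x,w]$ with $w \lessdot y$ can be of that type: it is graded of length $\ell(w)-\ell(x)=\binom{n}{2}-1$, whereas $\mf{S}_{n-1}$ has rank $\binom{n-1}{2}$, and these differ by $n-2>0$ for $n\ge 3$. (Concretely, when $[x,y]\cong\mf{S}_3$ the coatom intervals are diamonds, corresponding to $\poi{231}$ and $\poi{312}$, not copies of the two-element chain $\mf{S}_2$.) Relatedly, your claim that $|D|=n$ follows from ``the $A_{n-1}$ incidence pattern of the coatoms'' is not an argument: Proposition~\ref{prop:all positions mentioned in coatoms} together with the fact that each of the $n-1$ coatoms moves exactly two positions of $y$ only yields $|D|\le 2(n-1)$.

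The paper closes exactly this gap by applying the induction not downward to coatoms but upward, to the two subintervals $[x_1,y]$ and $[x_2,y]$ of $[x,y]$ corresponding to $[23\cdots n1,\,w_0^n]$ and $[n12\cdots(n-1),\,w_0^n]$ in $\mf{S}_n$; these genuinely are isomorphic to $\mf{S}_{n-1}$, so induction (through Propositions~\ref{prop:symbols in the factor b} and~\ref{prop:swap-string}) endows each with a thin swap-string of length $n-1$. Because these two intervals share $n-3$ coatoms of $[x,y]$, their swap-strings share $n-2$ values, so the union covers exactly $n$ positions; because every coatom of $[x,y]$ lies in $[x_1,y]\cup[x_2,y]$, Proposition~\ref{prop:all positions mentioned in coatoms} shows $x$ and $y$ agree outside those $n$ positions; and the length count $\ell(y)-\ell(x)=\binom{n}{2}$ then forces the thin increasing/decreasing configuration, at which point Proposition~\ref{prop:swap-string means factor} applies. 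Note that even a repaired version of your plan using a single $\mf{S}_{n-1}$-subinterval would still be short: knowing that $n-1$ positions form a thin reversed block says nothing about where the $n$-th differing position sits or why thinness survives adjoining it, and it is exactly the overlap of the \emph{two} swap-strings in $n-2$ values that pins this down. If you replace your coatom-based step with this two-subinterval argument, the rest of your write-up goes through.
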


\begin{proof}
First note that $\poi{21}\cong\mf{S}_2$ is the following poset.
\begin{center}
\begin{tikzpicture}
\foreach \x in {0,1} {\fill[black] (0,\x) circle (2pt);}
\draw (0,0) -- (0,1);
\end{tikzpicture}
\end{center}
If $[x,y] \cong \mf{S}_2$, then $\ell(y) = \ell(x) + 1$, and so a reduced word for $x$ must be obtained from a reduced word for $y$ by deleting a single letter.  A single letter is necessarily a factor, and so the result holds for $n = 2$.

Now consider some integer $n > 2$, and suppose, inductively, that the result holds for $w_0^{n-1}$.

Let $[x,y] \in \mf{S}_m$ be an interval that is isomorphic to $\mf{S}_n$.  In $\mf{S}_n$, there are two intervals
\begin{equation}\label{eqn:n-1 intervals}
[23\cdots n1,w_0^n] \text{\ \ and\ \ } [n12\cdots (n-1),w_0^n],
\end{equation}
each of which is isomorphic to $\mf{S}_{n-1}$.  Thus, there must be two such intervals $[x_1,y]$ and $[x_2,y]$ in $[x,y]$.  Recall the results of Propositions~\ref{prop:symbols in the factor b} and~\ref{prop:swap-string}.  Let the swap-string for $[x_i,y] \cong \mf{S}_{n-1}$ have values $h_1^{(i)} < \cdots < h_{n-1}^{(i)}$.

Because $[x_1,y]$ and $[x_2,y]$ overlap extensively in $[x,y]$, as do the intervals of \eqref{eqn:n-1 intervals} in $\mf{S}_n$, their respective swap-strings must share many values.  In particular, at the second highest rank in $[x,y]$, the intervals $[x_1,y]$ and $[x_2,y]$ overlap in $n-3$ elements.  Thus the two swap-strings share $n-2$ values.  It remains to determine how these two swap-strings could fit together.  In order to satisfy the thinness condition for the swap-string of $[x_i,y]$, the $n-2$ shared values must be either $\{h_1^{(i)}, \ldots, h_{n-2}^{(i)}\}$ or $\{h_2^{(i)}, \ldots, h_{n-1}^{(i)}\}$.

Note that $[x_1,y] \cup [x_2,y]$ includes all of the coatoms of $[x,y]$.  By Proposition~\ref{prop:all positions mentioned in coatoms}, $x$ and $y$ cannot differ in any positions outside the union of the two  swap-strings.  This union encompasses exactly $n$ positions.  Because the swap-strings are thin, and because $\ell(y) - \ell(x) = \ell(w_0^n)$, we must have the $n$ positions form an increasing substring in $x$ and a decreasing substring in $y$, and these two monotonic substrings must be thin in their respective permutations.  Proposition~\ref{prop:swap-string means factor} now implies that some ${\sf i} \in R(x)$ can be obtained from some ${\sf j} \in R(y)$ by deleting a factor, completing the proof.
\end{proof}

To illustrate the proof of Theorem~\ref{thm:forces}, we present the following example.

\begin{ex}\label{ex:thm}
Let $x = 321456$ and $y = 361542$ in $\mf{S}_6$, for which $[x,y] \cong \mf{S}_4$. In the language of the proof of Theorem~\ref{thm:forces}, then, $n = 4$. Let $x_1 = 341562$ and $x_2 = 361245$. For $i \in \{1,2\}$, the interval $[x_i, y]\subset[x,y]$ is isomorphic to $\mf{S}_3$, as depicted in Figure~\ref{fig:xi intervals}. Note that these intervals share $4 - 3 = 1$ coatom (the permutation $361452 \in \mf{S}_6$), and the swap-string $\{4,5,6\}$ for $[x_1,y]$ shares $4-2 = 2$ values with the swap-string $\{2,4,5\}$ for $[x_2,y]$.

\begin{figure}[htbp]
\begin{tikzpicture}
\draw (0,0) coordinate (a);
\draw (-1.5,1.5) coordinate (b);
\draw (1.5,1.5) coordinate (c);
\draw (-1.5,3) coordinate (d);
\draw (1.5,3) coordinate (e);
\draw (0,4.5) coordinate (f);
\foreach \x in {a, b, c, d, e, f} {\fill[black] (\x) circle (2pt);}
\draw (a) node[below] {$x_1 = 341562$};
\draw (b) node[left] {$351462$};
\draw (c) node[right] {$341652$};
\draw (d) node[left] {$351642$};
\draw (e) node[right] {$361452$};
\draw (f) node[above] {$y = 361542$};
\draw (a) -- (b) -- (d) -- (f) -- (e) -- (c) -- (a);
\draw (b) -- (e);
\draw (c) -- (d);
\end{tikzpicture}
\hspace{.5in}
\begin{tikzpicture}
\draw (0,0) coordinate (a);
\draw (-1.5,1.5) coordinate (b);
\draw (1.5,1.5) coordinate (c);
\draw (-1.5,3) coordinate (d);
\draw (1.5,3) coordinate (e);
\draw (0,4.5) coordinate (f);
\foreach \x in {a, b, c, d, e, f} {\fill[black] (\x) circle (2pt);}
\draw (a) node[below] {$x_2 = 361245$};
\draw (b) node[left] {$361425$};
\draw (c) node[right] {$361254$};
\draw (d) node[left] {$361452$};
\draw (e) node[right] {$361524$};
\draw (f) node[above] {$y = 361542$};
\draw (a) -- (b) -- (d) -- (f) -- (e) -- (c) -- (a);
\draw (b) -- (e);
\draw (c) -- (d);
\end{tikzpicture}
\caption{The intervals $[x_1,y]$ and $[x_2,y]$ described in Example~\ref{ex:thm}, illustrating the proof of Theorem~\ref{thm:forces}.}\label{fig:xi intervals}
\end{figure}
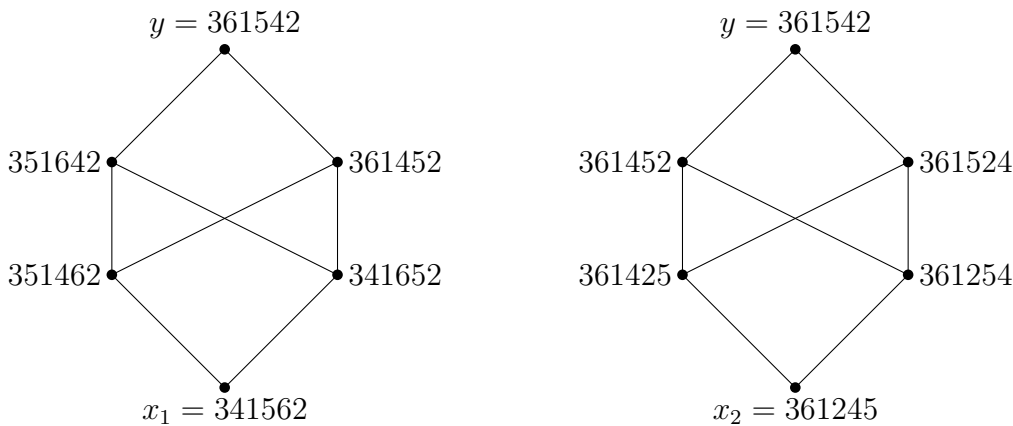
\end{ex}

In fact, Proposition~\ref{prop:swap-string means factor} also tells us the form of the factor forced by $w_0^n$.

\begin{cor}
For all integers $n > 1$, if $[x,y] \cong \mf{S}_n$, then some ${\sf i} \in R(x)$ can be obtained from some ${\sf j} \in R(y)$ by deleting a factor ${\sf b}$, where ${\sf b}^t \in R(w_0^n)$ for some $t \in \mathbb{Z}$.
\end{cor}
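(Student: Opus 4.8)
The plan is to extract the corollary directly from the proof of Theorem~\ref{thm:forces}, where the relevant factor has in fact already been produced in the desired form. Recall that the concluding step of that proof shows that, for any interval $[x,y] \cong \mf{S}_n$, the permutations $x$ and $y$ agree as strings outside of a single thin monotonic substring of length $n$, which appears in increasing order in $x$ and in decreasing order in $y$. This is precisely the hypothesis of Proposition~\ref{prop:swap-string means factor} in the case $k = n$, so essentially all of the combinatorial work has already been carried out.

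First I would invoke Proposition~\ref{prop:swap-string means factor} with $k = n$. The point to emphasize is that its conclusion carries more information than the bare assertion of Theorem~\ref{thm:forces}: it guarantees reduced words ${\sf ac} \in R(x)$ and ${\sf abc} \in R(y)$ in which the intervening block ${\sf b}$ satisfies ${\sf b}^t \in R(w_0^k)$ for some $t \in \mathbb{Z}$. Specializing to $k = n$ immediately yields ${\sf b}^t \in R(w_0^n)$, which is exactly the refinement the corollary asserts.

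I would then finish by setting ${\sf i} = {\sf ac}$ and ${\sf j} = {\sf abc}$. By construction, deleting the factor ${\sf b}$ from ${\sf j}$ returns ${\sf i}$, and ${\sf b}$ is precisely the block whose shift lies in $R(w_0^n)$. The hard part, such as it is, was already handled inside Theorem~\ref{thm:forces}: establishing that the union of the two length-$(n-1)$ swap-strings forces $x$ and $y$ to differ only along a single thin increasing/decreasing substring of length $n$. Once that geometric picture is in place, there is no remaining obstacle — the form of the deleted factor is read off verbatim from Proposition~\ref{prop:swap-string means factor}, which was itself proved by explicitly transforming $y$ into $x$ through simple reflections that reverse the swap-string and hence trace out a shifted reduced word for $w_0^n$.
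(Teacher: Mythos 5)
Your proposal is correct and follows exactly the paper's intended argument: the paper gives no separate proof of the corollary, remarking only that it follows because the proof of Theorem~\ref{thm:forces} ends by establishing the thin monotonic substring picture and invoking Proposition~\ref{prop:swap-string means factor}, whose conclusion already exhibits the deleted factor ${\sf b}$ with ${\sf b}^t \in R(w_0^n)$. Reading off ${\sf i} = {\sf ac}$ and ${\sf j} = {\sf abc}$ as you do is precisely the paper's reasoning (the only cosmetic difference being that the $n=2$ base case is handled directly rather than via the swap-string picture, and it trivially satisfies the corollary as well).
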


\section{Open questions}\label{section:questions}

We have now documented a family of permutations that do not force factors and a second family of permutations that do force factors.  Completely characterizing those permutations that do (or do not) force factors is still an open question, and one which could shed significant light on the interval structure of the Bruhat order for the symmetric group.

In a different direction, the present work studies only the finite Coxeter group of type $A$, although the analogous question can be asked for Coxeter groups of other types as well.

\section*{Acknowledgements}

I am grateful for the thoughtful suggestions of an anonymous referee.


\begin{thebibliography}{99}

\bibitem{bjorner-brenti} A.~Bj\"{o}rner and F.~Brenti, \textit{Combinatorics of Coxeter Groups}, Graduate Texts in Mathematics 231, Springer, New York, 2005.

\bibitem{brenti} F.~Brenti, A combinatorial formula for Kazhdan-Lusztig polynomials, \textit{Invent.~Math.} \textbf{118} (1994), 371--394.

\bibitem{dyer} M.~J.~Dyer, On the ``Bruhat graph'' of a Coxeter system, \textit{Compos.~Math.} \textbf{78} (1991), 185--191.

\bibitem{hultman1} A.~Hultman, Bruhat intervals of length $4$ in Weyl groups, \textit{J.~Combin.~Theory, Ser.~A} \textbf{102} (2003), 163--178.

\bibitem{hultman2} A.~Hultman, \textit{Combinatorial complexes, Bruhat intervals and reflection distances}, Ph.D. thesis, KTH, 2003.

\bibitem{jantzen} J.~C.~Jantzen, \textit{Moduln mit Einem H\"ochsten Gewicht}, Lecture Notes in Mathematics 750, Springer-Verlag, Berlin, 1979.

\bibitem{macdonald} I.~G.~Macdonald, \textit{Notes on Schubert Polynomials}, Laboratoire de combinatoire et d'informatique math\'{e}matique (LACIM), Universit\'{e} du Qu\'{e}bec \`{a} Montr\'{e}al, Montreal, 1991.

\bibitem{thesis} B. E. Tenner, \emph{The combinatorics of reduced decompositions}, Ph.D. thesis, MIT, 2006.

\bibitem{patt-bru} B. E. Tenner, Pattern avoidance and the Bruhat order, \textit{J. Combin. Theory Ser. A} \textbf{114} (2007), 888--905.

\bibitem{rdpp} B. E. Tenner, Reduced decompositions and permutation patterns, \textit{J. Algebraic Combin.} \textbf{24} (2006), 263--284.

\end{thebibliography}
\end{document}